\newcommand\blfootnote[1]{%
  \begingroup
  \renewcommand\thefootnote{}\footnote{#1}%
  \addtocounter{footnote}{-1}%
  \endgroup
}
\newcommand{\cA}{{\mathcal A}}
\newcommand{\cB}{{\mathcal B}}
\newcommand{\cF}{{\mathcal F}}
\newcommand{\cM}{{\mathcal M}}
\newcommand{\N}{{\mathbbm N}}
\newcommand{\Z}{{\mathbbm Z}}
\newcommand{\lcm}{\operatorname{lcm}}
\newcommand{\card}{\operatorname{card}}
\newcommand{\dl}{\underline{d}}
\newcommand{\du}{\overline{d}}
\newcommand{\Spec}{\operatorname{Spec}}
\newtheorem {lemma}{Lemma}
\newtheorem{theorem}{Theorem}
\newtheorem {bemerkung}{Remark}
\newtheorem{proposition}{Proposition}
\newtheorem{beispiel}{Example}
\newtheorem{frage}{Question}
\newtheorem{vermutung}{Conjecture}
\newenvironment{remark} {\begin{bemerkung} \normalfont }{\end{bemerkung}}
\begin{document}

\title{Irregular $\cB$-free Toeplitz sequences via\\ Besicovitch's construction of sets of multiples without density}
\author{Gerhard Keller
\thanks{Research partly supported by the Polish National Science Center UMO-2019/33/B/ST1/00364 }
}
\affil{Department of Mathematics, University of Erlangen-N\"urnberg, \\
 Cauerstr. 11, 91058 Erlangen, Germany
 \par Email: keller@math.fau.de}

\maketitle
\begin{abstract}
Modifying Besicovitch's construction of a set $\cB$ of positive integers whose set of multiples $\cM_\cB$ has no asymptotic density, we provide examples of such sets $\cB$ for which $\eta:=1_{\Z\setminus\cM_\cB}\in\{0,1\}^\Z$ is a Toeplitz sequence. Moreover our construction produces examples, for which $\eta$ is not only quasi-generic for the Mirsky measure (which has discrete dynamical spectrum), but also for some measure of positive entropy. On the other hand, modifying slightly an example from Kasjan, Keller, and Lema\'nczyk, we construct a set $\cB$ for which $\eta$ is an irregular Toeplitz sequence but for which the orbit closure of $\eta$ in $\{0,1\}^\Z$ is uniquely ergodic.
\end{abstract}
\blfootnote{\emph{MSC 2010 classification:} Primary 37A35, 37A45, 37B05, 37B10, 37B40; Secondary 11N25.}
\blfootnote{\emph{Keywords:} $\cB$-free dynamics, sets of multiples, density, irregular Toeplitz sequence.}

\section{Introduction}
For subsets $\cB\subseteq\N\setminus\{1\}$  denote
by $\cM_\cB:=\bigcup_{b\in\cB}b\Z$ the set of all multiples of $\cB$. The density 
$d(\cM_\cB):=\lim_{N\to\infty}N^{-1}\card(\cB\cap[1:N])$ exists in many cases, 
in particular if $\cB$ is \emph{thin}, i.e.~if $\sum_{b\in\cB}1/b<\infty$,
but Besicovitch \cite{Besicovitch1935} provided examples where it does not exist. A bit later, Davenport and Erd\H{o}s \cite{DavenportErdos1937} proved that the logarithmic density of $\cM_\cB$ always exists and coincides with the lower density $\dl(\cM_\cB)$. For more background on this see \cite[Sec.~2.5]{BKKL2015}.

Recurrence properties of the sequence $\eta=1_{\cF_\cB}:=1_{\Z\setminus\cM_\cB}\in\{0,1\}^\Z$ can be studied using dynamical systems theory, see in particular \cite{BKKL2015}. To that end denote by $S$ the left shift on $\{0,1\}^\Z$ and restrict this homeomorphism to the closure $X_\eta$ of $\{S^n\eta:n\in\Z\}$ in $\{0,1\}^\Z$. There is a distinguished invariant measure $\mu_\eta$ on $X_\eta$ (its Mirsky measure) for which $\eta$ is quasi-generic. 
Indeed, $N^{-1}\sum_{n=1}^N\delta_{S^n\eta}$ converges weakly to $\mu_\eta$ along each subsequence $(N_i)_i$ along which the lower density $\dl(\cM_\cB)$ is attained.
Hence $\eta$ is generic for $\mu_\eta$ if and only if $d(\cM_\cB)$ exists.
It follows that for the sets $\cB$ constructed by Besicovitch \cite{Besicovitch1935}, the point $\eta$ is quasi-generic for at least one further invariant measure on $X_\eta$.

In \cite{BKKL2015} and \cite{KKL2016} properties of the topological and measure theoretic dynamical systems $(X_\eta,S)$ and $(X_\eta,S,\mu_\eta)$ were characterized in (elementary) number theoretic terms.
Combining some of these results, it turns out that Besicovitch's examples always lead to proximal systems $(X_\eta,S)$, namely to systems where each closed invariant subset contains the fixed point $0^\Z$, and that such systems have positive topological entropy and host a huge collection of ergodic invariant measures, among them a unique measure of maximal entropy, see Remark~\ref{remark:Besicovitch-proximal} in Section~\ref{sec:like-Besicovitch}.
But there are also plenty of sets $\cB$ for which the system $(X_\eta,S)$ itself is minimal. In these cases, $\eta$ is a Toeplitz sequence (see Remark~\ref{remark:toeplitz}), and it is hitherto unknown whether
there are examples of this type where $X_\eta$ can host more invariant measures than just the Mirsky measure.\footnote{For general Toeplitz shifts it is known that this is possible \cite{Williams1984}.}

Following Hall \cite{Hall1996}, we call $\cB$ a \emph{Besicovitch set}, if the density $d(\cM_\cB)$ exists.
In this note we modify Besicovitch's construction and prove the the following result:
\footnote{The question answered by Theorem~\ref{theo:1} arose when working on \cite{KKL2016} with Stanis\l{}aw Kasjan and Mariusz Lema\'n{}czyk. It was explicitly formulated in \cite{Dymek2017}.}

\begin{theorem}\label{theo:1}
There are sets $\cB\subseteq\N\setminus\{1\}$ with the following properties:
\begin{compactenum}[i)]
\item The sequence $\eta=1_{\cF_\cB}$ is an irregular Toeplitz sequence.
\item The set of shift invariant measures on $X_\eta$ contains at least one measure of positive entropy.
\item Depending on the details of the construction one can make sure that
\begin{compactenum}[(a)]
\item $\cB$ is not a Besicovitch set and $\eta$ is quasi-generic for some measure of positive entropy, or
\item $\cB$ is a Besicovitch set, so $\eta$ is generic for the Mirsky measure, but there is also some measure of positive entropy.
\end{compactenum}
\end{compactenum}
\end{theorem}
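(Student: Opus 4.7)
The plan is to refine Besicovitch's block construction so that the resulting sequence $\eta = 1_{\cF_\cB}$ becomes a Toeplitz sequence, while preserving Besicovitch's density pathology (for case (a)) or a carefully tamed version of it (for case (b)). I would write $\cB = \bigsqcup_{k \ge 1} \cB_k$ as a disjoint union of blocks, where each $\cB_k$ is built only from integers whose prime factors lie in a finite set $\cP_k$ of primes, and the $\cP_k$ are chosen pairwise disjoint. With this design, membership of $n$ in $\cM_{\cB_k}$ depends only on the residue of $n$ modulo a periodic modulus $Q_k$ (a product of prime powers drawn from $\cP_k$), and the $Q_k$ are pairwise coprime. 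This block-by-block periodicity is the mechanism that forces $\eta$ to be Toeplitz: every $n$ covered by some $\cM_{\cB_k}$ is automatically $Q_k$-periodic, and the non-covered positions are shown to be eventually periodic at a suitable product $\prod_{j \le K} Q_j$ by a Chinese Remainder Theorem argument exploiting the disjointness of the $\cP_j$, in the spirit of the $\cB$-free minimality criteria from \cite{KKL2016}.

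To inherit Besicovitch's density oscillation, each $\cB_k$ is tuned so that it only ``activates'' on scales much larger than the characteristic scale of the previous blocks; this produces a lower density of $\cM_\cB$ strictly below its upper density, consistent with the Davenport--Erd\H{o}s logarithmic convergence. Translated back to $\eta$, the density of positions that are $\prod_{j \le K} Q_j$-periodic stays bounded away from $1$ uniformly in $K$, which is the defining feature of an irregular Toeplitz sequence. For the positive entropy claim, the key observation is that within each block one has many combinatorial choices of which integers to put into $\cB_k$; if these are arranged so that they produce an exponentially (in $L$) growing number of distinct $L$-patterns in $X_\eta$, the variational principle supplies a shift-invariant measure of positive entropy.

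For case (a), I would keep the oscillation strong enough that $d(\cM_\cB)$ fails to exist; along a subsequence realising the upper density, $\eta$ becomes quasi-generic for a second invariant measure inheriting the block-diversity entropy, which is the desired positive-entropy quasi-generic measure. For case (b), the block sizes must be trimmed so that upper and lower density coincide, making $\cB$ a Besicovitch set and $\eta$ generic for the Mirsky measure alone; the surviving combinatorial richness must still provide a positive entropy invariant measure on $X_\eta$, now carried by configurations that $\eta$ itself visits only along vanishing-density subsequences. I expect the main obstacle to lie precisely in case (b): the trimming that forces $d(\cM_\cB)$ to exist must not destroy the block diversity that feeds positive entropy into $X_\eta$. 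Resolving this will require careful bookkeeping to show that the entropy-generating patterns, although of vanishing density along the orbit of $\eta$, nevertheless survive in the orbit closure and support a shift-invariant measure disjoint from the Mirsky measure.
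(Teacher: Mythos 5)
Your high-level plan (modify Besicovitch so that the ``missing digits'' become periodically filled, and exploit combinatorial freedom within each Besicovitch scale to feed positive entropy into $X_\eta$) is in the right spirit, and your final paragraph correctly identifies case (b) as the delicate point. But the specific mechanisms you propose have genuine gaps, and the paper's actual construction is different in two essential places.

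First, the structural assumption that each block $\cB_k$ consists of integers \emph{all of whose prime factors} lie in a finite set $\cP_k$, with the $\cP_k$ pairwise disjoint, is incompatible with $\eta$ being a Toeplitz sequence. By Proposition~\ref{prop:top-reg}, the Toeplitz property for a primitive $\cB$ requires, in particular for $j=1\notin\cB$, a finite set $P_1$ of primes with $\cB=\cB/1\subseteq\cM_{P_1}$. If the $\cP_k$ are pairwise disjoint and nonempty, then for all large $k$ the blocks $\cB_k$ have no prime factor in $P_1$, so $\cB\not\subseteq\cM_{P_1}$ for any finite $P_1$. Relatedly, the count of $\cP_k$-smooth integers in $[T_k,2T_k)$ is $T_k^{o(1)}$ when $T_k\to\infty$ with $\cP_k$ fixed, so the density pathology $\du(\cM_\cB)\approx\frac12$ is lost if you restrict $\cB_k$ to such integers. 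The paper's mechanism for Toeplitz is different: it keeps $\cB_k\subseteq[T_k,T_k+L_k)$ an essentially arbitrary (density-controlled) subset, and then \emph{removes} the set $F=\bigcup_j j\cdot\cF_{P_j}^*$, which forces $\cB/j\subseteq\cM_{P_j}$ for each $j\notin\cB$ (Lemma~\ref{lemma:cB-properties}~c)). Notice that here the $P_j$ need not be disjoint and indeed grow.

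Second, the entropy argument cannot be reduced to ``many combinatorial choices of $\cB_k$ give exponentially many $L$-patterns, then invoke the variational principle.'' You must exhibit a \emph{single} $\cB$ and show that the actual trace of $\cM_\cB$ on $[T_k,T_k+L_k)$ has large empirical block entropy. The obstruction is that even if you seed $J_k$ with a high-complexity word, the resulting pattern $\cM_\cB\cap[T_k,T_k+L_k)$ is \emph{perturbed}: positions are removed to intersect $F$ (for the Toeplitz constraint) and positions are added because of overlaps with $E_1',\dots,E_{k-1}'$. The paper controls this with the Kolmogorov-complexity Lemma~\ref{lemma:entropy-complexity}, which shows that a word of near-maximal complexity remains entropy-rich under any such sparse modification, and then estimates $H_n(\nu_2)$ from the surviving block entropy (Lemma~\ref{lemma:pos-entropy}). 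Your sketch leaves this stability issue unaddressed, and without it the entropy claim is unsupported.

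Finally, for case (b) the resolution is simpler than you anticipate: one does not need to show that entropy-rich patterns ``survive'' along a vanishing-density subsequence by a separate argument. The measure $\nu_2$ is obtained directly as a weak limit of $\frac{1}{L_k}\sum_{j=T_k}^{T_k+L_k-1}\delta_{S^j\eta}$; any such limit is shift-invariant because $L_k\to\infty$, and it is supported on $X_\eta$ automatically. Choosing $T_k=k^2 L_k$ then makes $\cB$ thin (hence Besicovitch) while $\nu_2$ still has positive entropy; no disjointness-from-Mirsky argument is needed.
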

\begin{remark}\label{remark:toeplitz}
Recall (e.g. from \cite[Sec.~2]{Downarowicz2015})  that a sequence $\omega\in\{0,1\}^\Z$ is a \emph{Toeplitz sequence}, if for each $n\in\Z$ there exists a positive integer $p$ such that $\omega_n=\omega_{n+kp}$ for all $k\in\Z$. Given $p\in\N$, denote ${\sf Per}_p(\omega)=\{n\in\Z: \omega_n=\omega_{n+kp}\text{ for all }k\in\Z\}$ and ${\sf Aper}(\omega)=\Z\setminus\bigcup_{p\in\N}{\sf Per}_p(\omega)$. With this notation, $\omega$ is a Toeplitz sequence if and only if ${\sf Aper}(\omega)=\emptyset$. A Toeplitz sequence $\omega$ is \emph{regular}, if $\sup_{p\in\N}d({\sf Per}_p(\omega))=1$ \cite[Thm.~2.8]{Downarowicz2015}, otherwise it is \emph{irregular}. The reader may have in mind that $\bigcup_{p=1}^N{\sf Per}_p(\omega)\subseteq{\sf Per}_{\lcm(1,\dots,N)}(\omega)$.

If $\omega$ is a regular Toeplitz sequence, then $(X_\omega,S)$ is uniquely ergodic and has entropy zero
\cite[Thm.~2.5]{Downarowicz2015}. For irregular Toeplitz sequences $\omega$ a wide range of different dynamical properties of $(X_\omega,S)$ is possible, see e.g.~\cite{Williams1984}, \cite{Bulatek1992}, \cite[Ex.~5.1, 6.1]{Downarowicz2015}. For irregular $\cB$-free Toeplitz sequences $\eta$ at least one of these dynamical possibilities is excluded, namely to have a uniquely ergodic system $(X_\eta,S)$ of positive entropy (cf.~\cite{Bulatek1992}), because the Mirsky measure always exists and has entropy zero.
\end{remark}

Other examples of irregular $\cB$-free Toeplitz sequences were provided in \cite[Ex.~4.2]{KKL2016}. It is not immediately clear from that construction, however, whether those  examples may/must possess at least two invariant measures, or whether they even may/must have positive entropy.
Here we modify and tune that construction in such a way that we end up with an irregular Toeplitz sequence $\eta$ for which $(X_\eta,S)$ is uniquely ergodic. I am indebted to Stanis\l{}aw Kasjan, who provided a more systematic description of the construction from \cite[Ex.~4.2]{KKL2016}, which was instrumental in proving the following theorem.

\begin{theorem}\label{theo:2}
There are sets $\cB\subseteq\N\setminus\{1\}$ with the following properties:
\begin{compactenum}[i)]
\item The sequence $\eta=1_{\cF_\cB}$ is an irregular Toeplitz sequence.
\item $X_\eta$ is uniquely ergodic, in particular of entropy zero.
\end{compactenum}
\end{theorem}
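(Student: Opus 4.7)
The plan is to adapt the systematic recasting of Example~4.2 of \cite{KKL2016} attributed to Kasjan in the introduction. I would choose a fast-growing sequence of pairwise coprime moduli $c_1<c_2<\cdots$ (for instance distinct primes growing super-exponentially) and, at each stage $n$, adjoin to $\cB$ a finite set $\cB_n$ of multiples of $c_n$ whose representatives are spread uniformly across the residue classes modulo $c_1\cdots c_{n-1}$. The sizes $|\cB_n|$ would be tuned so that $\sum_{b\in\cB}1/b<\infty$, so $\cB$ is thin and in particular Besicovitch.

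The Toeplitz and irregularity properties then follow from the arithmetic structure. If $z\in\Z$ is not a multiple of any element of $\bigcup_{n\le N}\cB_n$, then $\eta_z$ is $p$-periodic in $z$ for $p=\lcm(c_1,\dots,c_N)$; as $N\to\infty$ this covers all of $\Z$, so $\eta$ is Toeplitz and $(X_\eta,S)$ is minimal. On the other hand, taking $|\cB_n|$ large enough (while still summably small) relative to $c_n$ keeps a uniformly positive density of coordinates whose $\eta$-value depends on some $\cB_m$ with $m>N$ for every $N$, so $\sup_p d({\sf Per}_p(\eta))<1$ and $\eta$ is irregular.

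The main challenge is unique ergodicity. For the minimal system $(X_\eta,S)$ this is equivalent to the uniform convergence, over $\omega\in X_\eta$, of the Birkhoff averages $L^{-1}\sum_{z=0}^{L-1}f(S^z\omega)$ for every continuous $f$, which in turn reduces to uniform convergence of the frequencies of every finite cylinder. I would approximate $\eta$ by the periodic sequence $\eta^{(N)}:=1_{\Z\setminus\cM_{\cB\cap[1,c_N]}}$, whose pattern frequencies are exactly computable via the Chinese Remainder Theorem, and control the defect $L^{-1}\card\{M\le z<M+L:\eta_z\ne\eta^{(N)}_z\}$ uniformly in the starting point $M$. The uniform-spread construction of $\cB_n$ modulo $c_1\cdots c_{n-1}$ is designed precisely to force this defect to be $O\bigl(\sum_{n>N}|\cB_n|/c_n\bigr)$, independently of $M$; letting $N\to\infty$ then yields the required uniform convergence of pattern frequencies, hence unique ergodicity with $\mu_\eta$ as the only invariant measure.

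The principal obstacle is reconciling (i) and (ii): irregularity demands that the adjunction of $\cB_n$ change $\eta$ on a set of uniformly positive density at every stage $n$, while the uniform-in-$M$ approximation requires the tail $\bigcup_{n>N}\cM_{\cB_n}$ to be uniformly negligible across every starting window. I expect this tension to be resolved by letting $c_n$ grow super-exponentially and distributing $\cB_n$ evenly modulo the earlier moduli: the first ingredient localises each $\cB_n$'s effect to a coarse scale (so irregularity persists), while the second spreads this effect evenly on finer scales (so the required uniformity is retained).
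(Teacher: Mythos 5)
The crux of the theorem is the unique ergodicity, and this is exactly where your proposal has a genuine gap. You assert that the ``uniform-spread'' choice of $\cB_n$ modulo $c_1\cdots c_{n-1}$ forces the defect
$L^{-1}\card\{M\le z<M+L:\eta_z\ne\eta^{(N)}_z\}$
to be $O\bigl(\sum_{n>N}|\cB_n|/c_n\bigr)$ uniformly in $M$, but this does not follow from the structure you describe. The trouble comes from elements $b\in\bigcup_{n>N}\cB_n$ with $b>L$: for such $b$ the count $\card(b\Z\cap[M,M+L))$ is either $0$ or $1$ depending on $M$, and if several such $b$'s have nearly coincident multiples, the count on a bad window can exceed the asymptotic density by an arbitrarily large factor. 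Spreading representatives of $\cB_n$ across residue classes modulo $c_1\cdots c_{n-1}$ controls nothing at the intermediate scales between $c_1\cdots c_{n-1}$ and $\min\cB_n$, which is precisely where the damage occurs; moreover it gives no control at all across different $\cB_n,\cB_m$, whose elements can easily be coprime.

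The paper resolves exactly this point by a gcd argument that is the real engine of the proof (Lemma~\ref{lemma:lemma2}). The elements $b_{k+j}$ are built so that each is a multiple of $c_{k+j}^{(k)}$ \emph{and} of $Q_{k+j}$, and the partition structure in \eqref{item:IV} guarantees that for $2\le r<s$ the factor $q_{i_{k+r}}^{(k+r)}\in P_{k+r}$ divides both $b_{k+r}$ and $b_{k+s}$. Combined with the growth condition \eqref{item:II}, this gives $\gcd(b_{k+r},b_{k+s})\geq\min P_{k+r}>(k+1)\ell_{k+1}$. If two of these had multiples in the same interval $[a,a+L)$ with $L<(k+1)\ell_{k+1}$, their difference would be a nonzero multiple of their gcd and yet have absolute value $<L$, a contradiction. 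Hence at most \emph{one} point of the tail $\cM_{\cB\setminus S_{k+1}}$ lands in any such interval, which is what makes the uniform bound
$\limsup_L\sup_a L^{-1}\card(\cM_\cB\cap[a,a+L))\le d(\cM_\cB)$
go through. (The paper then finishes not by your direct uniform-frequency argument but by noting that this forces $\mu\{x_0=1\}\ge1-d(\cM_\cB)$ for every invariant $\mu$ and invoking the Keller--Richard/Moody rigidity result; your route would in fact also work and is arguably more elementary, \emph{if} you had the uniform defect bound --- but that bound is the whole difficulty.)

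A secondary issue: as described, your construction does not guarantee that $\eta$ is Toeplitz at all. By Proposition~\ref{prop:top-reg}, one needs in particular a finite set of primes $P_1$ with $\cB\subseteq\cM_{P_1}$; but if $\cB_n$ consists of multiples of the distinct prime $c_n$ with unconstrained cofactors, then $\cB$ contains elements divisible by infinitely many different primes and no finite $P_1$ works. In the paper this is enforced by making every $b_{k+1}$ divisible by $q^{(1)}_{i_1}=2$ (and more generally by one prime from each $P_\ell$), another role played by the partition structure. The irregularity argument is also softer than you suggest: the ``uncertain'' coordinates at period $\ell_k$ are not the ones whose $\eta$-value changes when later $\cB_m$'s are adjoined (that set has density tending to $0$), but the coordinates in $\cM_{\cA_{S_k}}\setminus\cM_{\cB}$, and one needs $\cA_{S_k}$ to contain a set such as $P_1\cdots P_k$ of \emph{large} density --- again an outcome of the partition (IV), not of uniform spreading alone. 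In short: the high-level strategy (a systematic recasting of Example~4.2 of \cite{KKL2016}) is the right one, but the construction needs the specific prime/partition scaffolding of the paper, whose role is to manufacture the large-gcd property; without it, steps (i) and (ii) are both unjustified.
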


Before we turn to the proof of Theorem~\ref{theo:1} in Section~\ref{sec:positive-entropy}, we prove Theorem~\ref{theo:2} in Section~\ref{sec:theo2}, provide a useful ``prime number characterization'' of those sets $\cB$ for which $\eta$ is a Toeplitz sequence in Section~\ref{sec:toeplitz-primes}, and prove a simplified version of Theorem~\ref{theo:1} (existence of at least two invariant measures for which $\eta$ is quasi-generic) in Proposition~\ref{prop:non-uniquely-ergodic} of Section~\ref{sec:like-Besicovitch}. The proof of one lemma, for which we rely on properties of Kolmogorov complexity, is deferred to Section~\ref{sec:complexity-entropy}.

\section{Proof of Theorem~\ref{theo:2}}\label{sec:theo2}

The starting point of the construction is a sequence $(P_k)_{k\in\N}$ of finite sets of prime numbers satisfying
\begin{enumerate}[(I)]
\item $P_1=\{2\}$,
\item\label{item:II} $\min P_{k+1}>k\,2^2\, Q_1\dots Q_k$ where $Q_j:=\prod_{q\in P_j}q$ (in particular $Q_{k+1}\geqslant 2^2\,3^{k}$) \footnote{A much weaker requirement would be possible, but this one simplifies the logic of the construction.}, and
\item\label{item:III} $d(\cM_{P_k})\geqslant 1-2^{-(k+2)}$ for $k\geqslant2$.
\end{enumerate}
It is convenient to denote the elements of $P_k$ by $q_1^{(k)},\dots,q_{t_k}^{(k)}$, so $\card(P_k)=t_k$ and $Q_k=\prod_{s=1}^{t_k}q_s^{(k)}$. Observe that $t_1=1$ and $q_1^{(1)}=2$.

Once the numbers $t_1,t_2,\dots$ are fixed we can choose the second basic ingredient of the construction, namely we fix \footnote{I am indebted to Stanis\l{}aw Kasjan, who suggested the  point of view expressed in \eqref{item:IV}. 
Note that if $R_{(i_1,\dots,i_k)}=\{r_1,r_2,\dots\}$, then the sets $R_{(i_1,\dots,i_k,s)}$ $(s=1,\dots,t_{k+1})$ may be chosen as $R_{(i_1,\dots,i_k,s)}:=\{r_{s+jt_{k+1}}:j=0,1,2,\dots\}$.
Note also that condition \eqref{item:II} has no counterpart in \cite{KKL2016}, it is introduced here to prove the unique ergodicity.}
\begin{enumerate}[(I)]\setcounter{enumi}{3}
\item \label{item:IV}
for each $k\in\N$, a partition of $\N$ into pairwise disjoint \emph{infinite} sets $R_{(i_1,\dots,i_k)}$ where $i_\ell\in\{1,\dots,t_\ell\}$ for all $\ell=1,\dots,k$, and such that
$R_{(1)}=R_{(t_1)}=\N$ and
\begin{equation*}
R_{(i_1,\dots,i_k)}=\bigcup_{s=1}^{t_{k+1}}R_{(i_1,\dots,i_k,s)}.
\end{equation*}
\end{enumerate}
After these preliminaries we define positive integers
\begin{equation*}
c_{k+j}^{(k)}=q_{i_1}^{(1)} q_{i_2}^{(2)}\cdots q_{i_k}^{(k)}\quad\text{for }j,k\in\N\text{ such that }
k+j\in R_{(i_1,\dots,i_k)},
\end{equation*}
As $\bigcup_{(i_1,\dots,i_k)}R_{(i_1,\dots,i_k)}=\Z$ in view of (\ref{item:IV}), this defines the numbers $c_{k+j}^{(k)}$ for all $j,k\in\N$. Finally let
\begin{equation*}
b_1=(q_1^{(1)})^3=2^3,\text{ and, for $k\geqslant 1$,\; } b_{k+1}=c_{k+1}^{(k)}Q_{k+1}.
\end{equation*}
It follows that
\begin{equation}\label{eq:ell_k}
\ell_k:=\lcm(b_1,\dots,b_k)=2^2\prod_{j=1}^k Q_j.
\end{equation}

\begin{lemma}
The sequence $\eta=1_{\cF_\cB}$ is an irregular Toeplitz sequence and $\cB$ is thin, i.e. $\sum_{b\in\cB}1/b<\infty$.
\end{lemma}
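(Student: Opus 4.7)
My plan is to establish thinness, the Toeplitz property, and irregularity separately, all revolving around the periods $\ell_{k_0}$ and the gcd computation
\[
\gcd(\ell_{k_0},b_k)=\begin{cases}b_k&\text{if }k\leq k_0,\\[2pt]2\,q_{i_2(k)}^{(2)}\cdots q_{i_{k_0}(k)}^{(k_0)}&\text{if }k>k_0,\end{cases}
\]
where $(i_1(k),\dots,i_{k-1}(k))$ is the unique profile with $k\in R_{(i_1(k),\dots,i_{k-1}(k))}$. Here $v_2(\ell_{k_0})=3>1=v_2(b_k)$ accounts for the factor $2$, and condition~(\ref{item:II}) guarantees that the primes of $Q_k$ for $k>k_0$ are $>\ell_{k_0}$, hence coprime to it. Thinness is then a one-line estimate: by~(\ref{item:II}), $b_{k+1}\geq Q_{k+1}>k\ell_k$, so $(b_k)$ grows super-geometrically and both $\sum_k 1/b_k$ and $d(\cM_\cB)$ are bounded by a small constant; in particular $d(\cF_\cB)$ is close to $1$.

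For the Toeplitz property I would fix $n\in\Z$ and produce a period of the form $\ell_{k_0}$. If $n\in\cM_\cB$, pick $k^*$ with $b_{k^*}\mid n$; then for any $k_0\geq k^*$ one has $b_{k^*}\mid\ell_{k_0}$, so $n+j\ell_{k_0}\in b_{k^*}\Z$ for every $j$. If $n\notin\cM_\cB$, let $J(n)$ be the smallest $j$ such that no prime of $P_j$ divides $n$; this is finite because $\min P_j\to\infty$ while $n$ has finitely many prime divisors. Take $k_0\geq J(n)$: for $k\leq k_0$ the gcd above equals $b_k$, which does not divide $n$, and for $k>k_0$ the product contains the factor $q_{i_{J(n)}(k)}^{(J(n))}\in P_{J(n)}$, again not dividing $n$. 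In either case $\gcd(\ell_{k_0},b_k)\nmid n$ for all $k$, so $b_k\nmid n+j\ell_{k_0}$ for all $j,k$, giving $n\in{\sf Per}_{\ell_{k_0}}(\eta)$.

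For irregularity I would show that, uniformly in $p\in\N$, some positive-density set avoids ${\sf Per}_p(\eta)$. Given $p$, choose $k_0=k_0(p)$ with $\min P_{k_0+1}>p$ (possible by~(\ref{item:II})) and set
\[
A_{k_0}:=\cF_\cB\cap\bigcap_{j=1}^{k_0}\cM_{P_j}.
\]
For $n\in A_{k_0}$, pick indices $s_j$ ($j\leq k_0$) with $q_{s_j}^{(j)}\mid n$. By~(\ref{item:IV}) the set $R_{(1,s_2,\dots,s_{k_0})}$ is infinite and contains some $k>k_0$; for this $k$ one has $\gcd(p,Q_k)=1$ and $\gcd(p,q_{i_j(k)}^{(j)})=1$ for every $j>k_0$ (both by our choice of $k_0$), so $\gcd(p,b_k)=\prod_{j\leq k_0,\,q_{s_j}^{(j)}\mid p}q_{s_j}^{(j)}$, which divides $n$ by construction. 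Hence $b_k\mid n+j_0p$ for some $j_0$, forcing $\eta_{n+j_0p}=0\neq 1=\eta_n$ and $n\notin{\sf Per}_p(\eta)$.

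It remains to lower-bound $d(A_{k_0})$ uniformly. A union bound on complements together with condition~(\ref{item:III}) gives $\underline{d}\bigl(\bigcap_{j=1}^{k_0}\cM_{P_j}\bigr)\geq 1-\tfrac12-\sum_{j\geq 2}2^{-(j+2)}=\tfrac38$, and thinness gives $d(\cF_\cB)$ close to $1$, so $d(A_{k_0})\geq c>0$ independently of $k_0$ (hence of $p$). Therefore $d({\sf Per}_p(\eta))\leq 1-c<1$ for every $p$, which is irregularity. The hard part will be the matching step for arbitrary $p$: exhibiting an explicit $k>k_0$ whose $R$-profile forces the prime factorisation of $\gcd(p,b_k)$ to divide $n$. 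The combinatorial room to do this is exactly what~(\ref{item:IV}) provides, while~(\ref{item:II}) supplies the coprimality required to kill unwanted contributions from $Q_k$ and the higher levels.
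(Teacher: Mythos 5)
Your proof is correct, and it differs from the paper's mainly in its level of self-containment. Where the paper invokes \cite[Thm.~B]{KKL2016} (the $\limsup_k(\cA_{S_k}\setminus S_k)=\emptyset$ criterion) for the Toeplitz property, and \cite[Lem.~4.3]{KKL2016} together with \cite[Thm.~2.5]{Downarowicz2015} to reduce irregularity to $\inf_k\dl(\cM_{\cA_{S_k}}\setminus\cM_\cB)>0$, you prove both statements directly from the definitions of ${\sf Per}_p(\eta)$, exhibiting an explicit period $\ell_{k_0}$ for each $n$ in the Toeplitz part and, in the irregularity part, unwinding what the cited lemmas encapsulate: given $p$, you pick $k_0$ with $\min P_{k_0+1}>p$, and for each $n$ in the positive-density set $A_{k_0}=\cF_\cB\cap\bigcap_{j\leq k_0}\cM_{P_j}$ you use the partition structure (\ref{item:IV}) to find a future $k$ whose profile matches the prime divisors of $n$ from $P_2,\dots,P_{k_0}$, so that $\gcd(p,b_k)\mid n$ and hence $b_k\mid n+j_0p$ for some $j_0$. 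The coprimality needed to discard the contributions from $P_j$ ($j>k_0$) and $Q_k$ is exactly what (\ref{item:II}) provides, and that is the same gcd mechanism underlying the paper's identity $\cA_{S_k}=S_k\cup P_1\cdots P_k$. Your set $A_{k_0}$ is a subset of the paper's $\cM_{\cA_{S_{k_0}}}\setminus\cM_\cB$, and your numerical lower bound ($3/8-1/4=1/8$, via a union bound) is slightly cruder than the paper's ($\tfrac{1}{2}(1-2^{-3})-\tfrac14=\tfrac{3}{16}$, via multiplicativity of $d(\cM_{P_j})$ over the pairwise coprime $P_j$), but both suffice. The trade-off: the paper is shorter by outsourcing to the general $\cB$-free Toeplitz machinery, while your version is self-contained and makes the role of each hypothesis (\ref{item:II})--(\ref{item:IV}) transparent at the point of use.
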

\begin{proof}
Let $S_k=\{b_1,\dots,b_k\}$ and define $\cA_{S_k}:=\{\gcd(\ell_k,b):b\in\cB\}$. Observe that 
\begin{equation*}
\cA_{S_k}=S_k\cup\{\gcd(\ell_k,b_{k+j}):j\geqslant1\}=S_k\cup\{q_{i_1}^{(1)}\cdots q_{i_k}^{(k)}:\exists j\geqslant1\text{ s.t. }k+j\in R_{(i_1,\dots,i_k)}\},
\end{equation*}
in particular $\limsup_{k\to\infty}(\cA_{S_k}\setminus S_k)=\emptyset$, so that $\eta$ is a Toeplitz sequence by \cite[Thm.~B]{KKL2016}. As each set $R_{(i_1,\dots,i_k)}$ is infinite, we have indeed
\begin{equation}\label{eq:ASk}
\cA_{S_k}=S_k\cup P_1\cdot P_2\cdots P_k.
\end{equation}

In order to prove that $\eta$ is irregular it suffices to show that
\begin{equation}\label{eq:irregular-condition-1}
\inf_{k\in\N}\dl\left(\cM_{\cA_{S_k}}\setminus\cM_\cB\right)>0,
\end{equation}
see \cite[Lem.~4.3]{KKL2016} together with \cite[Thm.~2.5]{Downarowicz2015}.
Observe first that $\du(\cM_\cB)\leqslant\sum_{k=1}^\infty1/b_k=2^{-3}+\sum_{k=1}^\infty1/Q_{k+1}\leqslant\frac{1}{4}$, because $Q_{k+1}\geqslant 2^2\,3^{k}$ by \eqref{item:II}. This shows in particular that $\cB$ is thin, so that the density $d(\cM_\cB)$ exists.

Next, observing \eqref{item:III} and the fact that the $P_k$ are pairwise disjoint sets of prime numbers,
\begin{equation*}
\begin{split}
d(\cM_{\cA_{S_k}})
&\geqslant 
d(\cM_{P_1\cdot P_2\cdots P_k})
=
d(\cM_{P_1}\cap\cM_{P_2}\cap\dots\cap\cM_{P_k})=d(\cM_{P_1})\cdots d(\cM_{P_k})\\
&\geqslant
\frac{1}{2}\cdot\prod_{j=2}^k(1-2^{-(j+2)})
\geqslant\frac{1}{2}\left(1-2^{-3}\right).
\end{split}
\end{equation*}
Hence the term in \eqref{eq:irregular-condition-1} is lower bounded by $\frac{1}{2}-\frac{1}{16}-\frac{1}{4}=\frac{3}{16}$.
this shows that $\eta$ is irregular.
\end{proof}

\begin{lemma}\label{lemma:lemma2}
If $k\ell_k\leqslant L<(k+1)\ell_{k+1}$, then
\begin{equation*}
\sup_{a\in\Z}\card\left(\cM_\cB\cap[a,a+L)\right)
\leqslant
L\cdot\left(d(\cM_{S_k})+\frac{4}{k}+\frac{1}{b_{k+1}}\right).
\end{equation*}
\end{lemma}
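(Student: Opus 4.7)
My plan is to decompose $\cM_\cB\cap[a,a+L)$ into three parts---the periodic part $\cM_{S_k}\cap[a,a+L)$, the single progression $b_{k+1}\Z\cap[a,a+L)$, and the tail $\bigcup_{j\geqslant 2}b_{k+j}\Z\cap[a,a+L)$---and bound each one separately. The first two are routine: since $\cM_{S_k}$ is $\ell_k$-periodic, the standard periodic estimate gives $\card(\cM_{S_k}\cap[a,a+L))\leqslant L\,d(\cM_{S_k})+\ell_k$, while trivially $\card(b_{k+1}\Z\cap[a,a+L))\leqslant L/b_{k+1}+1$. Because $L\geqslant k\ell_k\geqslant k$, both of the additive errors $\ell_k$ and $1$ are bounded by $L/k$.

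The main obstacle is the tail, an infinite union for which the naive union bound diverges. I would instead show that the tail contributes \emph{at most a single point} to any interval of length $L$. The key arithmetic ingredient is that whenever $j<j'$ with $j\geqslant 2$, the integers $b_{k+j}$ and $b_{k+j'}$ share a common prime factor exceeding $L$. Indeed, $c_{k+j'}^{(k+j'-1)}$ contains by definition exactly one prime from each $P_\ell$ with $\ell\leqslant k+j'-1$; picking the factor $p$ coming from $P_{k+j}$ (possible since $j\leqslant j'-1$), one sees that $p\mid b_{k+j'}$ and, since $p$ also divides $Q_{k+j}$, that $p\mid b_{k+j}$. Condition \eqref{item:II} applied with $k$ replaced by $k+j-1$ then gives
\[
p\geqslant\min P_{k+j}>(k+j-1)\,\ell_{k+j-1}\geqslant(k+1)\,\ell_{k+1}>L,
\]
and in the same way $b_{k+j}\geqslant Q_{k+j}>L$ for every $j\geqslant 2$.

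Given this, the tail estimate falls out. Suppose $n\neq n'$ are two distinct nonzero points of the tail inside $[a,a+L)$, with $b_{k+j}\mid n$, $b_{k+j'}\mid n'$ and $j\leqslant j'$. Setting $d:=b_{k+j}$ when $j=j'$ and $d:=p$ (the shared prime from the previous paragraph) when $j<j'$, we have $d>L$ dividing $n-n'$ while $|n-n'|\leqslant L-1<d$, a contradiction. Moreover, if $0\in[a,a+L)$ then every integer in the interval has absolute value $<L$, so it cannot be a nonzero multiple of any $b_{k+j}$ with $j\geqslant 2$, and the intersection reduces to $\{0\}$. Either way the tail contributes at most one point, so combining the three estimates and absorbing $\ell_k+2\leqslant 3L/k$ yields
\[
\card(\cM_\cB\cap[a,a+L))\leqslant L\bigl(d(\cM_{S_k})+1/b_{k+1}+3/k\bigr),
\]
which is comfortably within the claimed bound.
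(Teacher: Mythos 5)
Your proof is correct and follows essentially the same route as the paper: the identical three-way decomposition into $\cM_{S_k}$, $b_{k+1}\Z$, and the tail $\bigcup_{j\geqslant 2}b_{k+j}\Z$, with the tail controlled by showing any two of its points in an interval of length $L$ would force a common divisor $>L$ via condition (II). You spell out the $j=j'$ and $0\in[a,a+L)$ cases more explicitly than the paper (which treats only $r<s$ and leaves the rest implicit) and your constants are marginally tighter, but the key lemma and the argument are the same.
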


\begin{proof} Abbreviate $I:=[a,a+L)$. Then
\begin{equation*}
\card\left(\cM_\cB\cap[a,a+L)\right)
\leqslant
\card\left(\cM_{S_k}\cap I\right)+\card\left(b_{k+1}\Z\cap I\right)
+\card\left(\cM_{\cB\setminus S_{k+1}}\cap I\right),
\end{equation*}
and,
as $S_k$ has period at most $\ell_k$ and as $L\geqslant k\ell_k$,
\begin{align*}
\card\left(\cM_{S_k}\cap I\right)
&\leqslant
L\,d(\cM_{S_k})+2\ell_k
\leqslant 
L\cdot\left(d(\cM_{S_k})+2/k\right),\\
\card\left(b_{k+1}\Z\cap I\right)
&\leqslant
[L/b_{k+1}]+1
\leqslant
L\cdot\left(1/b_{k+1}+1/L\right),\qquad\text{ and}\\
\card\left(\cM_{\cB\setminus S_{k+1}}\cap I\right)
&\leqslant
1,
\end{align*}
where the last estimate is based on the following observation:
If $mb_{k+r},nb_{k+s}\in I=[a,a+L)$ for some $2\leqslant r<s$ and $m,n\in\Z$, then
$0<|mb_{k+r}-nb_{k+s}|<L$ so that
$\gcd(b_{k+r},b_{k+s})<L$. However,
\begin{equation*}
\gcd(b_{k+r},b_{k+s})
=
\gcd\left(c_{k+r}^{(k+r-1)}Q_{k+r},q_{i_1}^{(1)}q_{i_2}^{(2)}\dots q_{i_{k+s-1}}^{(k+s-1)}Q_{k+s}\right)
\geqslant
q_{i_{k+r}}^{(k+r)}
\end{equation*}
where $k+s\in R_{(i_1,\dots,i_{k+r})}$, so that, also in view of \eqref{item:II} and \eqref{eq:ell_k},
\begin{equation*}
\gcd(b_{k+r},b_{k+s})
\geqslant
q_{i_{k+r}}^{(k+r)}
\geqslant
\min P_{k+r}
>
(k+r-1)\,2^2\,Q_1\cdots Q_{k+r-1}
\geqslant
(k+1)\ell_{k+1}>L.
\end{equation*}
\end{proof}

Lemma~\ref{lemma:lemma2} implies
\begin{equation*}
\limsup_{L\to\infty}\;\sup_{a\in\Z}\frac{1}{L}\card\left(\cM_\cB\cap[a,a+L)\right)
\leqslant
d(\cM_\cB).
\end{equation*}
It follows that
\begin{equation*}
\liminf_{L\to\infty}\;\inf_{x\in X_\eta}\frac{1}{L}\sum_{k=0}^{L-1}x_k
\geqslant
1-d(\cM_\cB).
\end{equation*}
In particular, $\mu\{x\in X_\eta:x_0=1\}\geqslant 1-d(\cM_\cB)$ for each invariant measure $\mu$ on $X_\eta$. But in view of \cite[Thm.~4]{KR2015} (which owes much to Moody \cite{Moody2002}) and the correspondence between the ``sets of multiples'' and ``the cut and project'' points of view on $\cB$-free numbers (see \cite{KKL2016}, in particular Lemma~4.1), the Mirsky measure is the only invariant measure on $X_\eta$ which satisfies this inequality. Hence $(X_\eta,S)$ is uniquely ergodic.

\section{Another characterization of the case when $\eta$ is a Toeplitz sequence}
\label{sec:toeplitz-primes}

A set $\cB\subseteq\N$ is \emph{primitive}, if no number from $\cB$ divides another one. If $\cB$ is not primitive, there is always a unique primitive subset $\cB'\subseteq\cB$ such that $\cM_\cB=\cM_{\cB'}$.

For $k\in\N$ let $\cB/k:=\{\frac{b}{k}: b\in\cB, k\mid b\}$. Observe that $\cB/k=\{1\}$ if and only if $k\in\cB$, whenever $\cB$ is primitive.
\begin{lemma}\label{lemma:lemma1}
Suppose that $\cB$ is primitive and
let $k\in\N\setminus\cB$. Then $\cB/k$ contains no infinite pairwise coprime subset if and only if there is a finite set of primes $P_k$ such that $\cB/k\subseteq \cM_{P_k}$.
\end{lemma}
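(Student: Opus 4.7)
The plan is to split the equivalence into two implications and dispatch each by an elementary counting argument.

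First I would record the standing observation $1 \notin \cB/k$: since $\cB$ is primitive and $k \notin \cB$, the only way to have $1 = b/k$ with $b \in \cB$ would be $b = k \in \cB$. So every element of $\cB/k$ is $\geqslant 2$ and in particular carries at least one prime divisor. The case where $\cB/k$ is finite is trivial on both sides (take $P_k$ to be the union of the prime divisors of the elements of $\cB/k$), so I may assume $\cB/k$ is infinite.

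For the direction $(\Leftarrow)$ I would use pigeonhole. If $\cB/k \subseteq \cM_{P_k}$ for a finite set of primes $P_k$, then every element of $\cB/k$ is divisible by some $p \in P_k$. Any infinite subset $A \subseteq \cB/k$ therefore contains infinitely many elements sharing a common $p \in P_k$, so $A$ is not pairwise coprime.

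For the direction $(\Rightarrow)$ I would argue by contrapositive through a greedy construction. Assume that for every finite set of primes $P$ we have $\cB/k \not\subseteq \cM_P$. Build pairwise coprime $a_1,a_2,\dots \in \cB/k$ inductively: pick any $a_1 \in \cB/k$, and having constructed $a_1,\dots,a_n$, let $P$ be the (finite) set of prime divisors of $a_1 \cdots a_n$ and select $a_{n+1} \in \cB/k \setminus \cM_P$, which exists by assumption. Since $a_{n+1} \geqslant 2$ it does have prime divisors, and all of them lie outside $P$, so $\gcd(a_{n+1},a_i)=1$ for every $i \leqslant n$. This produces the desired infinite pairwise coprime subset.

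The argument is a classical Ramsey-style greedy extraction and I do not anticipate a real obstacle. The only subtlety worth flagging is the preliminary reduction $1 \notin \cB/k$: both primitivity of $\cB$ and $k \notin \cB$ are used there, and this is precisely what prevents the greedy step from selecting $a_{n+1}=1$, which would carry no prime divisor and spoil the coprimality conclusion.
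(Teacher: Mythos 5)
Your proof is correct, but it takes a different route from the paper. The paper does not argue this equivalence from scratch: it simply invokes \cite[Thm.~3.7]{BKKL2015}, which asserts that $\cB/k$ contains an infinite pairwise coprime subset if and only if $\cB/k\not\subseteq\cM_C$ for every finite set $C\subseteq\N\setminus\{1\}$, and then observes that ranging $C$ over finite sets of integers is equivalent to ranging over finite sets of primes. Your proof instead reproves the relevant special case of that theorem directly: the $(\Leftarrow)$ direction by pigeonhole on the finite set of primes, and the $(\Rightarrow)$ direction by a greedy extraction of a pairwise coprime sequence. What the paper's approach buys is brevity given the available citation; what your approach buys is self-containment and transparency about exactly where the hypotheses enter. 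In particular, your preliminary observation that $1\notin\cB/k$ (using both primitivity of $\cB$ and $k\notin\cB$) is precisely the point flagged in the paper's footnote about the minor flaw in \cite[Thm.~3.7]{BKKL2015} when $1\in\cB$, and your greedy step makes it explicit why that hypothesis matters — without it $a_{n+1}=1$ could be selected, carrying no prime divisor. So your argument and the paper's citation-based argument are hedging against the same edge case, just at different levels of explicitness.
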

\begin{proof}
$\cB/k$ contains an infinite pairwise coprime subset if and only if $\cB/k\not\subseteq\cM_C$ for all finite sets $C\subseteq\N\setminus\{1\}$ \cite[Thm.~3.7]{BKKL2015} 
\footnote{As stated in \cite{KKL2016}, Theorem 3.7 of \cite{BKKL2015} has a minor flaw (which is completely irrelevant for the results in that paper): The implication ``(f)$\Rightarrow$(g)'' does not hold, when $1\in\cB$. 
As we apply this implication to the set $\cB/k$, we make sure that $1\not\in\cB/k$ 
by requiring 
$k\not\in\cB$.}, 
and the latter is equivalent to $\cB/k\not\subseteq\cM_P$ for all finite sets $P$ of primes. The claim of the lemma is just the negation of this equivalence.
\end{proof}

\begin{proposition}\label{prop:top-reg}
Suppose that $\cB$ is primitive.
The sequence $\eta=1_{\cF_\cB}$ is a Toeplitz sequence if and only if for every $k\in\N\setminus\cB$ there is a finite set $P_k$ of primes such that $\cB/k\subseteq\cM_{P_k}$.
\end{proposition}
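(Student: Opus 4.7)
The plan is to reduce the Toeplitz property position-by-position to a prime-by-prime condition on $\cB$, and then match it with the proposition's hypotheses. The first step is to unfold ${\sf Per}_p(\eta)$ at a position $n\in\cF_\cB$: since $\eta_n=1$ there, $n\in{\sf Per}_p(\eta)$ says $n+kp\notin\cM_\cB$ for every $k\in\Z$, which, once one observes that $b\mid n+kp$ has a solution $k\in\Z$ if and only if $\gcd(p,b)\mid n$, is equivalent to $\gcd(p,b)\nmid n$ for every $b\in\cB$. Factoring prime-by-prime, this becomes: for every $b\in\cB$ there is a prime $q$ with $v_q(p)>v_q(n)$ and $v_q(b)>v_q(n)$, where $v_q$ denotes the $q$-adic valuation. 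Since only finitely many primes $q$ can satisfy $v_q(p)>0$, the Toeplitz property at $n$ is equivalent to the existence of a finite set $P$ of primes such that every $b\in\cB$ admits a ``witness'' $q\in P$ with $v_q(b)>v_q(n)$ (conversely $p:=\prod_{q\in P}q^{v_q(n)+1}$ does the job). Positions $n\in\cM_\cB$ are trivial (take $p$ to be any $b_0\in\cB$ dividing $n$), and by the symmetry $\eta_{-n}=\eta_n$ only $n\in\cF_\cB\cap\N\subseteq\N\setminus\cB$ needs to be handled.

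For the forward direction fix $k\in\N\setminus\cB$. If $k\in\cM_\cB$, primitivity forces $\cB/k=\emptyset$: any $b\in\cB$ with $k\mid b$ would be a multiple of the $b_0\in\cB$ dividing $k$, hence $b=b_0$ by primitivity, hence $k=b_0\in\cB$, a contradiction; so $P_k=\emptyset$ works. If $k\in\cF_\cB$, the Toeplitz property supplies a finite prime set $P_k$ such that every $b\in\cB$ has a witness $q\in P_k$ with $v_q(b)>v_q(k)$; whenever additionally $k\mid b$ this gives $v_q(b/k)\ge 1$, i.e.~$b/k\in\cM_{P_k}$, so $\cB/k\subseteq\cM_{P_k}$.

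For the reverse direction, given $n\in\cF_\cB\cap\N$, the plan is to assemble one finite prime set that works for all $b\in\cB$ at once by taking $P:=\bigcup_{d\mid n}P_d$. Every positive divisor $d$ of $n$ lies in $\N\setminus\cB$ (otherwise $d\mid n$ and $d\in\cB$ would give $n\in\cM_\cB$), so each $P_d$ is defined and finite, and only finitely many such $d$ occur, making $P$ finite. For any $b\in\cB$ the key trick is to work at $d:=\gcd(n,b)$ rather than at $n$: since $d\mid b$, $b/d\in\cB/d\subseteq\cM_{P_d}$, so some $q\in P_d\subseteq P$ divides $b/d$, giving $v_q(b)>v_q(d)=\min(v_q(n),v_q(b))$; the strict inequality rules out the minimum being $v_q(b)$, forcing $v_q(d)=v_q(n)$ and hence $v_q(b)>v_q(n)$. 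Thus $q\in P$ is a witness for $b$ at $n$, and the reformulation from the first paragraph delivers $n\in{\sf Per}_p(\eta)$ for $p:=\prod_{q\in P}q^{v_q(n)+1}$. The main obstacle is exactly the ``$n\nmid b$'' case, where the hypothesis $\cB/n\subseteq\cM_{P_n}$ alone contributes no witness; the $\gcd$-trick at the proper divisors of $n$, together with the promotion of the strict inequality across the $\min$, is the key mechanism that makes the argument go through.
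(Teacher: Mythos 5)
Your proof is correct, but it takes a genuinely different and more self-contained route than the paper's. The paper proves the proposition by stacking two cited results: Theorem~B of \cite{KKL2016}, which characterizes the Toeplitz property of $\eta$ by the absence of $k\in\N$ and an infinite pairwise coprime $\cA\subseteq\N\setminus\{1\}$ with $k\cA\subseteq\cB$, and then Lemma~\ref{lemma:lemma1} (itself resting on Theorem~3.7 of \cite{BKKL2015}), which translates ``$\cB/k$ has no infinite pairwise coprime subset'' into ``$\cB/k\subseteq\cM_{P_k}$ for some finite prime set $P_k$.'' You instead work directly from the definition of ${\sf Per}_p(\eta)$: for $n\in\cF_\cB$ you observe that $b\mid n+kp$ is solvable in $k$ iff $\gcd(p,b)\mid n$, factor this into $p$-adic valuations, and reformulate the pointwise Toeplitz condition as ``there is a finite set $P$ of primes such that each $b\in\cB$ has a witness $q\in P$ with $v_q(b)>v_q(n)$.'' The forward direction then reads off $P_k$ from the Toeplitz condition at $n=k$ (and disposes of $k\in\cM_\cB\setminus\cB$ by primitivity, where $\cB/k=\emptyset$), while the reverse direction handles a given $n\in\cF_\cB\cap\N$ by taking $P=\bigcup_{d\mid n}P_d$ and, for each $b\in\cB$, applying the hypothesis at $d=\gcd(n,b)$; the resulting strict inequality $v_q(b)>v_q(d)=\min(v_q(n),v_q(b))$ forces $v_q(d)=v_q(n)$, which is exactly the witness condition. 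This $\gcd$-trick is the non-obvious step, and it is what lets you avoid any appeal to infinite pairwise coprime sets. What the paper's argument buys is brevity and the reuse of machinery already established in the $\cB$-free literature; what yours buys is an elementary, citation-free proof that also makes explicit how to compute a period $p=\prod_{q\in P}q^{v_q(n)+1}$ at each position $n$. Both are valid; yours would be a worthwhile alternative to record for readers who want to see the mechanism rather than invoke external theorems.
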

\begin{proof}
$\eta$ is a Toeplitz sequence if and only if there are no $k\in\N$ and no infinite pairwise coprime set $\cA\subseteq\N\setminus\{1\}$ such that $k\cA\subseteq\cB$
\cite[Thm.~B]{KKL2016}.
As $\cB$ is primitive, there can never be $k\in\cB$ and an infinite pairwise coprime set $\cA\subseteq\N\setminus\{1\}$ such that $k\cA\subseteq\cB$. Hence
$\eta$ is a Toeplitz sequence if and only if there are no $k\in\N\setminus\cB$ and no infinite pairwise coprime set $\cA\subseteq\N\setminus\{1\}$ such that $k\cA\subseteq\cB$.
 But $k\cA\subseteq\cB$ is equivalent to $\cA\subseteq\cB/k$, so that an application of Lemma~\ref{lemma:lemma1} finishes the proof.
\end{proof}

\section{Non-uniquely ergodic $\cB$-free Toeplitz sequences}\label{sec:like-Besicovitch}

For each $\varepsilon>0$, Besicovitch \cite{Besicovitch1935} provided an example of a primitive set $G\subseteq\N\setminus\{1\}$ such that the lower asymptotic density
$\dl(\cM_G)<\varepsilon$, while the upper asymptotic density of this set is $\du(\cM_G)>\frac{1}{2}$.
\footnote{Besicovitch denotes the set $\cM_G$ by $H$.} 
\begin{remark}\label{remark:Besicovitch-proximal} 
The set $\cM_G$ in Besicovitch's example contains arbitrarily long intervals $[T,2T)$.
Since, by the Bertrand postulate (proved by Tchebichef \cite[pp.~371--382]{Tchebichef1852}) each such interval contains at least one prime number, the set $G$  contains infinitely many prime numbers, and so the corresponding subshift $X_\eta=X_{1_{\cF_G}}$ is proximal \cite[Thm.~B]{BKKL2015}.
Its ``tautification'' $G'$ is proximal, as well \cite[Thm.~4.5]{BKKL2015} (see also \cite[Sec.~2.3]{KKL2016}), so the corresponding $X_{\eta'}$ is hereditary \cite[Thm.~3]{Keller2017}.
Since $\cF_{G'}$ has the same positive logarithmic density as $\cF_G$, $X_{\eta'}$ and hence also $X_\eta$ have positive entropy (equal to this logarithmic density)
\cite[Prop.~K and Cor.~1.7]{BKKL2015}.
\end{remark}

Our goal is to modify Besicovitch's construction in several respects by defining
primitive sets $\cB\subseteq\N\setminus\{1\}$ such that 
\begin{compactitem}[-]
\item $\cM_\cB\subseteq\cM_G$, so $\dl(\cM_\cB)\leqslant\dl(\cM_G)<\varepsilon$,
\item $\eta=1_{\cF_\cB}\in\{0,1\}^\Z$
is a Toeplitz sequence,
\item there is at least one invariant measure of positive entropy for which $\eta$ is not quasi-generic, and
\item depending on details of the construction, $\eta$ is generic for the Mirsky measure, or it is quasi-generic for some measure of positive entropy (and, of course, for the Mirsky measure).
\end{compactitem}

We start by recalling the essentials of Besicovitch's construction, following more or less the outline in \cite[second part of Thm.~0.1]{Hall1996}:
Take positive numbers $\varepsilon,\varepsilon_i$ $(i=1,2,\dots)$ such that
\begin{equation*}
\varepsilon<\frac{1}{4}, \quad\sum_{i=1}^\infty\varepsilon_i<\frac{\varepsilon}{2}.
\end{equation*}
Denote $E_T:=\cM_{[T,2T)}$ and write $e(T)$ for the asymptotic density of $E_T$.
As $E_T$ is periodic, there are numbers $\lambda(T)$ such that 
the mean density of the set $E_T$ on any interval of more than $\lambda(T)$ consecutive integers is $<2e(T)$.

Define integers $1=T_0<T_1<T_2<T_3<\dots$ so that
\begin{align}
&e(T_1)<\varepsilon_{1}\nonumber\\
T_2>\lambda(T_1),\quad& e(T_2)<\varepsilon_{2}\label{eq:Bes-constr}\\
T_3>\lambda(T_2),\quad& e(T_3)<\varepsilon_{3}\nonumber\\
&\vdots\nonumber
\end{align}
These inductive choices are possible, because of 
Erd\H{o}s' result \cite{Erdos1935a} that $\lim_{T\to\infty}e(T)=0$. \footnote{
Besicovitch \cite{Besicovitch1935} used his weaker Theorem~1, which asserts that
$e(2^1)+e(2^2)+\dots+e(2^n)=o(n)$.}
Observe that, given $T_1,\dots,T_k$, the index $T_{k+1}$ can be chosen arbitrarily large. We will make use of this freedom of choice in the sequel.
\footnote{Instead of the constraint $T_k>\lambda(T_{k-1})$ in \eqref{eq:Bes-constr}, Besicovitch requires more explicitly $2^{i_k}>(2^{i_{k-1}+1})!$.}

Besicovitch's set $G$ is then defined as
\begin{equation}\label{eq:Besicovitch-1}
G=\bigcup_{k=1}^\infty \left[T_k,2T_{k}\right)\setminus(E_{T_1}\cup\dots\cup E_{T_{k-1}}),
\end{equation}
and obviously $[T_k,2T_{k})\subseteq\bigcup_{j=1}^\infty E_{T_j}=\cM_G$ for all $k$. As Besicovitch observed,
\begin{equation*}
\du(\cM_G)
\geqslant
\limsup_{k\to\infty}(2T_k)^{-1}\card(\cM_G\cap[1,2T_k))
\geqslant
\limsup_{k\to\infty}(2T_k)^{-1}\card[T_k,2T_k)
=
\frac{1}{2},
\end{equation*}
whereas
\begin{equation}\label{eq:Besicovitch-3}
\begin{split}
\dl(\cM_G)
&\leqslant
\liminf_{k\to\infty}T_k^{-1}\card(\cM_G\cap[1,T_k))
=
\liminf_{k\to\infty}T_k^{-1}\card((E_{T_1}\cup\dots\cup E_{T_{k-1}})\cap[1,T_k))\\
&\leqslant
\liminf_{k\to\infty}\sum_{j=1}^{k-1}\card(E_{T_j}\cap[1,T_k))
\leqslant
\sum_{j=1}^{k-1}2e(T_j)
<
\sum_{j=1}^\infty2\varepsilon_j
<\varepsilon.
\end{split}
\end{equation}

We now proceed to introduce additional constraints to the choice of the indices $T_k$ and to construct a set $\cB\subseteq\N\setminus\{1\}$ with the following properties:
\begin{enumerate}[(I)]
\item For every $j\in\N\setminus\cB$ there is a finite set $P_j$ of primes such that $\cB/j\subseteq\cM_{P_j}$ (see also Section~\ref{sec:toeplitz-primes}),
\item $\dl(\cM_\cB)<\varepsilon$, and
\item $\du(\cM_\cB)\geqslant\frac{1}{2}-2\varepsilon$.
\end{enumerate}
To this end assume
that integers $1=T_0<T_1<\dots<T_{k}$, positive integers $L_1,\dots,L_k$,
and finite sets $P_1,\dots,P_{T_k-1}$ of prime numbers
are chosen such that (setting $T_{-1}=1$)
\begin{compactenum}[(A)]
\item the following strengthening of Besicovitch's constraints \eqref{eq:Bes-constr} is satisfied for $i=1,\dots,k$:
\begin{equation*}
T_i\geqslant L_i>\lambda(T_{i-1}),\quad e(T_i)<\varepsilon_i,
\end{equation*}
\item
$\card(j\cdot\cF_{P_j}\cap[T,2T))\leqslant2d(j\cdot\cF_{P_j})\cdot T$ for all 
$j\in[1,T_{k-1})$ and $T\geqslant T_k$,
\item $\Spec([1,2T_k))\subseteq P_j$  for all $j\in[T_{k-1},T_k)$, and
\item $d(j\cdot\cF_{P_j})<\varepsilon\cdot2^{-(j+1)}$ for all $j\in[1,T_k)$.
\end{compactenum}

Observe first that conditions (A) -- (D) are empty and hence trivially satisfied for $k=0$.

Now we choose $T_{k+1}\geqslant L_{k+1}>\max\{T_k,\lambda(T_k)\}$ and sets $P_j$ $(T_k\leqslant j<T_{k+1})$ inductively in such a way that (A) -- (D) hold for $k+1$ instead of $k$:
First we make sure that $T_{k+1}$ is large enough to satisfy (A) and (B) for $k+1$.
(For property (B) note that the sets $j\cdot\cF_{P_j}$ are periodic.)
Then we choose the additional $P_j$ big enough such that also (C) and (D) are satisfied for $k+1$.

For the next step of the construction we fix, for all $k\in\N$,
sets $J_k\subseteq[T_k,T_k+L_k)$ (with additional properties to be specified below), and define
\begin{align*}
\cF_{P_j}^*:=&\cF_{P_j}\setminus\{1\}\\
F:=&\bigcup_{j=1}^\infty j\cdot\cF_{P_j}^*\\
E_j':=&\cM_{J_j\setminus F}\quad(j\in\N)\\
\cB_n:=&\bigcup_{k=1}^n\left(J_k\setminus F\right)\setminus\bigcup_{j=1}^{k-1}E_j'\quad(n\in\N)\\
\cB:=&\bigcup_{n=1}^\infty\cB_n.
\end{align*}

\begin{lemma}\label{lemma:cB-properties}
\begin{compactenum}[a)]
\item $\cB$ is primitive by construction.
\item $\cB\cap F=\emptyset$ by construction.
\item$\cB/j\subseteq\cM_{P_j}$ for every $j\in\N\setminus\cB$.
\item $\eta=1_{\cF_\cB}$ is a Toeplitz sequence.
\end{compactenum}
\end{lemma}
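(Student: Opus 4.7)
My plan is to dispatch (a), (b), and (d) as essentially formal consequences (the lemma already signals this by writing ``by construction'' for the first two) and to locate the genuine content in (c), where the design feature to exploit is the removal of $1$ encoded in $\cF_{P_j}^*$.

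Part (b) I would read off directly from the defining formula $\cB_n\subseteq\bigcup_{k\leqslant n}(J_k\setminus F)$. For (a) I would attach to each $b\in\cB$ its smallest level $k$, \ie the smallest $k$ with $b\in(J_k\setminus F)\setminus\bigcup_{j<k}E_j'$, and then rule out a hypothetical divisibility $b\mid b'$ in $\cB$ with $b<b'$ by splitting on the levels $k$ of $b$ and $k'$ of $b'$. If $k<k'$, then $b'\in b\Z\subseteq\cM_{J_k\setminus F}=E_k'$ is subtracted away at level $k'$. If $k\geqslant k'$ instead, then the interval bound $J_k\subseteq[T_k,T_k+L_k)\subseteq[T_k,2T_k)$ (using $L_k\leqslant T_k$, which is built into the inductive choice $T_k\geqslant L_k$) together with $T_k\geqslant T_{k'}$ gives $b'<2T_{k'}\leqslant 2T_k\leqslant 2b$, contradicting $b'\geqslant 2b$.

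The step that calls for actual thought is (c), which I view as the heart of the lemma. I would take $j\in\N\setminus\cB$ and $b\in\cB$ with $j\mid b$, set $m:=b/j$, and note that $j\notin\cB$ together with $b\in\cB$ forces $b\neq j$, hence $m\geqslant 2$. By (b) we have $b\notin F$, so in particular $b\notin j\cdot\cF_{P_j}^*$, \ie $m\notin\cF_{P_j}^*=\cF_{P_j}\setminus\{1\}$; combined with $m\neq 1$ this forces $m\in\cM_{P_j}$, as required. The subtle point worth emphasising in the write-up is that the deletion of $1$ in $\cF_{P_j}^*$ is exactly what prevents $j$ itself from being swallowed by $F$, while the hypothesis $j\notin\cB$ rules out the otherwise degenerate case $m=1$. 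This is where I expect the only (modest) obstacle; it amounts to careful definition-chasing.

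Finally, (d) follows at once from (a), (c), and Proposition~\ref{prop:top-reg}: the finite sets $P_j$ fixed during the inductive construction supply the required witnesses. The quantitative choices encoded in (A)--(D) play no role at this point and will be needed only in the subsequent density bounds $\dl(\cM_\cB)<\varepsilon$ and $\du(\cM_\cB)\geqslant\frac{1}{2}-2\varepsilon$.
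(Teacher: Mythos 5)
Your proof is correct and follows essentially the same route as the paper's. The argument for (c) is verbatim the paper's proof (with $m$ playing the role of the paper's $b\in\cB/j$): use (b) to conclude $jm\notin j\cdot\cF_{P_j}^*$, hence $m\notin\cF_{P_j}\setminus\{1\}$, and then use $j\notin\cB$ to exclude $m=1$; and (d) is, exactly as you say, Proposition~\ref{prop:top-reg} applied via (a) and (c). The paper dismisses (a) and (b) as obvious; your spelled-out primitivity argument for (a) — splitting on the levels $k,k'$ at which $b,b'$ enter $\cB$, killing the case $k<k'$ because $b'\in E_k'$ would have been subtracted, and killing $k\geqslant k'$ by the size bound $b'<T_{k'}+L_{k'}\leqslant 2T_{k'}\leqslant 2T_k\leqslant 2b<2b'\leqslant b'$ coming from $L_i\leqslant T_i$ in constraint (A) — is sound and a worthwhile expansion of that ``obvious.''
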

\begin{proof}
a) and b):\;Obvious.\\
c)\;Let $b\in\cB/j$. Then $jb\in\cB$, whence $jb\not\in F$ by assertion b). In particular, $jb\not\in j\cdot\cF_{P_j}^*$, i.e.~$b\not\in\cF_{P_j}^*$. Hence $b=1$ or $b\in\cM_{P_j}$. But $b\neq1$ since $j\not\in\cB$.\\
d)\;$\eta$ is a Toeplitz sequence by Proposition~\ref{prop:top-reg} and assertions a) and c).
\end{proof}

\begin{lemma}\label{lemma:cB-lemma}
\begin{compactenum}[a)]
\item $\cM_{\cB_n}=\bigcup_{j=1}^n E_j'$.
\item $\cM_\cB\subseteq\cM_G$.
\item $\cM_\cB\cap J_k\supseteq J_k\setminus F$ for all $k$.
\item $\dl(\cM_\cB)<\varepsilon$.
\end{compactenum}
\end{lemma}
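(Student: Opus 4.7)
The plan is to prove the four assertions in order, with the structural identity (a) doing most of the work. For (a), I would argue by induction on $n$. The base case $n=1$ is immediate since $\cB_1=J_1\setminus F$. For the inductive step, set $A_n=(J_n\setminus F)\setminus\bigcup_{j<n}E_j'$ so that $\cB_n=\cB_{n-1}\cup A_n$. The key point is that any $b\in(J_n\setminus F)\setminus A_n$ lies in $\bigcup_{j<n}E_j'=\cM_{\cB_{n-1}}$ by the inductive hypothesis, hence is a multiple of some element of $\cB_{n-1}$, so all of its multiples are already contained in $\cM_{\cB_{n-1}}$. This gives $\cM_{\cB_n}=\cM_{A_n}\cup\cM_{\cB_{n-1}}=\cM_{J_n\setminus F}\cup\cM_{\cB_{n-1}}=E_n'\cup\bigcup_{j<n}E_j'$, which is (a).

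Assertion (c) is then trivial: every $x\in J_k\setminus F$ divides itself and so lies in $E_k'=\cM_{J_k\setminus F}\subseteq\cM_\cB$, while of course $x\in J_k$. For (b), I would combine (a) with the containment $J_j\subseteq[T_j,T_j+L_j)\subseteq[T_j,2T_j)$---which uses $L_j\leqslant T_j$ from (A)---to deduce $E_j'\subseteq\cM_{[T_j,2T_j)}=E_{T_j}$. A short induction on the definition \eqref{eq:Besicovitch-1} of $G$, observing that any $b\in[T_j,2T_j)\setminus G$ is a multiple of some smaller $b'\in[T_i,2T_i)$ with $i<j$, gives $E_{T_j}\subseteq\cM_G$ for every $j$, whence $\cM_\cB=\bigcup_j E_j'\subseteq\cM_G$.

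The main work is (d), which I would handle by mimicking Besicovitch's estimate \eqref{eq:Besicovitch-3}. Writing $\cM_\cB=\bigcup_j E_j'$ by (a), I would evaluate the counting function at $N=T_k$ and split the union at $j=k$. For $j\geqslant k$ the set $J_j$ lies in $[T_k,\infty)$, so its positive multiples do not meet $[1,T_k)$ and $E_j'\cap[1,T_k)=\emptyset$. For $j<k$ condition (A) yields $T_k\geqslant T_{j+1}>\lambda(T_j)$, so the defining property of $\lambda$ bounds $\card(E_{T_j}\cap[1,T_k))<2e(T_j)T_k<2\varepsilon_j T_k$; summing gives $\card(\cM_\cB\cap[1,T_k))<2T_k\sum_j\varepsilon_j<\varepsilon T_k$, and dividing by $T_k$ and taking $\liminf_{k\to\infty}$ delivers $\dl(\cM_\cB)<\varepsilon$. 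I do not anticipate any real obstacle; the only subtle point is to notice that for $j\geqslant k$ the contributions vanish, which lets Besicovitch's argument go through unchanged in our modified setting.
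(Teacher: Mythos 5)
Your proof is correct and follows essentially the same route as the paper's. Parts (a) and (c) match the paper's argument (your element-level reformulation of (a) is equivalent to the paper's identity $\cM_{\cM_{\cB_n}\cup((J_{n+1}\setminus F)\setminus\cM_{\cB_n})}=\cM_{\cM_{\cB_n}\cup(J_{n+1}\setminus F)}$). In (b) you re-derive the inclusion $E_{T_j}\subseteq\cM_G$, which the paper treats as already established at \eqref{eq:Besicovitch-1} via $\cM_G=\bigcup_j E_{T_j}$; and in (d) you re-derive the counting estimate inline, whereas the paper simply cites (b) together with \eqref{eq:Besicovitch-3}, which already gives $\dl(\cM_\cB)\leqslant\dl(\cM_G)<\varepsilon$ with no new computation. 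Both shortcuts in the paper are available to you, but your more explicit versions are sound.
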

\begin{proof}
a)\;
For $n=1$ we have $\cB_1=J_1\setminus F$, whence $\cM_{\cB_1}=E_1'$. It follows inductively that
\begin{equation*}
\begin{split}
\cM_{\cB_{n+1}}
=&
\cM_{\cB_n}\cup\cM_{\left(J_{n+1}\setminus F\right)\setminus \cM_{\cB_n}}
=
\cM_{\cM_{\cB_n}\cup \left(\left(J_{n+1}\setminus F\right)\setminus \cM_{\cB_n}\right)}\\
=&
\cM_{\cM_{\cB_n}\cup \left(J_{n+1}\setminus F\right)}
=
\cM_{\cB_n}\cup E_{n+1}'
=\bigcup_{j=1}^{n+1}E_j'.
\end{split}
\end{equation*}
b)\;
In view of assertion a), $\cM_\cB=\bigcup_{j=1}^\infty E_j'\subseteq\bigcup_{j=1}^\infty E_{T_j}=\cM_G$, see \eqref{eq:Besicovitch-1}.\\
c)\;
$\cM_\cB\cap J_k=\bigcup_{j=1}^\infty E_j'\cap J_k\supseteq\bigcup_{j=1}^\infty\left(J_j\setminus F\right)\cap J_k=J_k\setminus F$.\\
d)\; follows from b) and~\eqref{eq:Besicovitch-3}.
\end{proof}

We will use the following two estimates:
\begin{lemma}\label{lemma:card-estimates} 
For all $k\in\N$,
\begin{compactenum}[a)]
\item $\card(F\cap[T_k,T_k+L_k))\leqslant\varepsilon L_k$, 
\item $\card\left(\bigcup_{j=1}^{k-1}E_j'\setminus(J_k\setminus F)\cap[T_k,T_k+L_k)\right)
\leqslant
2\varepsilon L_k$.
\end{compactenum}
\end{lemma}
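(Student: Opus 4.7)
The plan is to decompose both unions by the generating index $j$ and, in each part, isolate the range of $j$ that can actually contribute to the window $[T_k,T_k+L_k)$. For part a) this yields a three-case analysis driven by (B), (C) and (D); for part b) it reduces to the pointwise inclusion $E_j'\subseteq E_{T_j}$ together with the defining property of the thresholds $\lambda(T_j)$.

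For part a), I write $F\cap[T_k,T_k+L_k)=\bigcup_{j\geqslant 1}\bigl(j\cdot\cF_{P_j}^*\cap[T_k,T_k+L_k)\bigr)$ and split into three ranges. For $j\geqslant T_k$ every element of $j\cdot\cF_{P_j}^*$ is at least $2j\geqslant 2T_k$, which exceeds $T_k+L_k$ by (A), so nothing is contributed. For $T_{k-1}\leqslant j<T_k$, if $jb\in[T_k,T_k+L_k)$ with $b\in\cF_{P_j}^*$ then $2\leqslant b<2T_k/T_{k-1}\leqslant 2T_k$, so by (C) every prime factor of $b$ lies in $\Spec([1,2T_k))\subseteq P_j$, contradicting $b\in\cF_{P_j}$. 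For $1\leqslant j<T_{k-1}$, property (B) (applied with the freedom to choose $L_k$ much larger than the period $jQ_{P_j}$ of $j\cdot\cF_{P_j}$) gives $\card(j\cdot\cF_{P_j}\cap[T_k,T_k+L_k))\leqslant 2\,d(j\cdot\cF_{P_j})\,L_k$, and (D) bounds $d(j\cdot\cF_{P_j})<\varepsilon\cdot 2^{-(j+1)}$. Summing over $j$ yields $\sum_{j<T_{k-1}}\varepsilon\cdot 2^{-j}\cdot L_k\leqslant\varepsilon L_k$, as claimed.

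For part b), the key inclusion is $E_j'=\cM_{J_j\setminus F}\subseteq\cM_{J_j}\subseteq\cM_{[T_j,2T_j)}=E_{T_j}$, since by definition $J_j\subseteq[T_j,T_j+L_j)\subseteq[T_j,2T_j)$. For every $j\leqslant k-1$ the window length $L_k$ exceeds $\lambda(T_j)$ (by (A) when $j=k-1$, and by a routine strengthening of (A) during the inductive construction for smaller $j$), so the defining property of $\lambda(T_j)$ yields $\card(E_{T_j}\cap[T_k,T_k+L_k))<2e(T_j)L_k<2\varepsilon_j L_k$. Summing over $j<k$ gives
\begin{equation*}
\card\Bigl(\bigcup_{j=1}^{k-1}E_j'\cap[T_k,T_k+L_k)\Bigr)\;<\;2L_k\sum_{j\geqslant 1}\varepsilon_j\;<\;\varepsilon L_k,
\end{equation*}
and since the set on the left of b) is contained in this one, the asserted bound $2\varepsilon L_k$ follows with slack.

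I expect the only genuine obstacle to be the middle range $T_{k-1}\leqslant j<T_k$ in part a): these indices are the most recently introduced, their densities $d(j\cdot\cF_{P_j})$ have not yet been forced small, and (B) alone would be powerless for them. Property (C), which makes $\cF_{P_j}^*$ empty throughout $[2,2T_k)$ for precisely these $j$, is exactly the device that kills their contribution. A minor bookkeeping point is that (B) as literally stated bounds the count on $[T_k,2T_k)$ by $2d(j\cdot\cF_{P_j})T_k$ rather than the tighter $2d(j\cdot\cF_{P_j})L_k$; the sharper form used above is legitimate because on any interval much longer than the period of $j\cdot\cF_{P_j}$ the density is within a factor two of its asymptotic value, and $L_k$ can be chosen accordingly at step $k$ of the inductive construction.
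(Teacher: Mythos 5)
Your argument follows the paper's own proof line by line: the three-range decomposition of $F\cap[T_k,T_k+L_k)$ by the generating index $j$, the use of (C) to kill the middle range $T_{k-1}\leqslant j<T_k$, the use of (B) and (D) to bound the low range $j<T_{k-1}$, and the observation $E_j'\subseteq E_{T_j}$ plus the defining property of $\lambda(T_j)$ for part b). So the approach is the same.

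One thing you get right that the paper glosses over: condition (B) as literally stated bounds $\card(j\cdot\cF_{P_j}\cap[T,2T))$ by $2d(j\cdot\cF_{P_j})T$, which upon restriction to $[T_k,T_k+L_k)\subseteq[T_k,2T_k)$ gives $2d(j\cdot\cF_{P_j})T_k$, not the $2d(j\cdot\cF_{P_j})L_k$ the paper writes. When $L_k\ll T_k$ (as in case (iii)(b) of Theorem~\ref{theo:1}, where $T_k=k^2L_k$), this is a real gap. Your fix --- at step $k$ of the construction the sets $P_j$ for $j<T_{k-1}$ are already fixed, so one may choose $L_k$ large compared with the (finitely many) periods of the $j\cdot\cF_{P_j}$, whence the count on any window of length $L_k$ is $\leqslant 2d(j\cdot\cF_{P_j})L_k$ --- is exactly what is needed, and it is compatible with the remaining constraints on $L_k$.

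A small overcorrection on your side: for part b) you do not need any ``strengthening of (A)'' to get $L_k>\lambda(T_j)$ for all $j\leqslant k-1$. The construction gives $L_{m+1}>T_m\geqslant L_m$, so $(L_i)_i$ is strictly increasing; combined with (A) in the form $L_{j+1}>\lambda(T_j)$ this yields $L_k\geqslant L_{j+1}>\lambda(T_j)$ for every $j\leqslant k-1$. So (A) as stated already suffices.
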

\begin{proof}
a)\;
\begin{equation*}
\begin{split}
&\card(F\cap[T_k,T_k+L_k))\\
&\leqslant
\sum_{j=1}^\infty\card([T_k,T_k+L_k)\cap j\cdot\cF_{P_j}^*)\\
&=
\sum_{j=1}^{T_{k-1}-1}\card([T_k,T_k+L_k)\cap j\cdot\cF_{P_j}^*)
+
\sum_{j=T_{k-1}}^{T_k-1}\card([T_k,T_k+L_k)\cap j\cdot\cF_{P_j}^*)\\
&\hspace*{3cm}+
\sum_{j=T_k}^\infty\card([T_k,T_k+L_k)\cap j\cdot\cF_{P_j}^*)\\
&\leqslant
\sum_{j=1}^{T_{k-1}-1}2d(j\cdot\cF_{P_j}^*)L_k
+\sum_{j=T_{k-1}}^{T_k-1}0
+\sum_{j=T_k}^{\infty}0
<
\sum_{j=1}^{T_{k-1}-1}\varepsilon2^{-j}L_k<\varepsilon L_k.
\end{split}
\end{equation*}
Here the first ``$0$-sum'' is due to property (C), and for the second ``$0$-sum'' one
only needs to observe that $1\not\in\cF_{P_j}^*$.
The final estimate uses property (D).\\
b)\;
\begin{equation*}
\card\left(\bigcup_{j=1}^{k-1}E_j'\setminus(J_k\setminus F)\cap[T_k,T_k+L_k)\right)
\leqslant
\sum_{j=1}^{k-1}\card\left(E_j'\cap[T_k,T_k+L_k)\right)
\leqslant
\sum_{j=1}^{k-1}2e(T_j)L_k
\leqslant
2\varepsilon L_k.
\end{equation*}
\end{proof}

\begin{proposition}\label{prop:non-uniquely-ergodic}
There are primitive sets $\cB$ with the following properties:
\begin{compactenum}[i)]
\item The sequence $\eta=1_{\cF_\cB}$ is a Toeplitz sequence.
\item $\cB$ is not a Besicovitch set.
\item The sequence $\eta$ is  quasi-generic for at least two measures.
\end{compactenum}
\end{proposition}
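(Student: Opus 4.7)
The plan is to complete the construction of Section~\ref{sec:like-Besicovitch} by making the specific choices $L_k := T_k$ and $J_k := [T_k, 2T_k)$ for every $k\in\N$. These are compatible with condition~(A), because the inductive step already enforces $T_{k+1} > \lambda(T_k)$, whence $L_{k+1} = T_{k+1} > \lambda(T_k)$ as required; the remaining inductive selection of the prime sets $P_j$ for $j\in[T_k,T_{k+1})$ proceeds exactly as described. The resulting set $\cB$ is primitive by Lemma~\ref{lemma:cB-properties}(a), and assertion~(i) is immediate from Lemma~\ref{lemma:cB-properties}(d).

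For assertion~(ii) the idea is to exhibit a concrete gap between the lower and upper densities of $\cM_\cB$. The lower bound $\dl(\cM_\cB) < \varepsilon$ is already Lemma~\ref{lemma:cB-lemma}(d). For the upper bound, combining Lemma~\ref{lemma:cB-lemma}(c) with Lemma~\ref{lemma:card-estimates}(a) yields
\begin{equation*}
\card\bigl(\cM_\cB\cap[T_k,2T_k)\bigr)
\geqslant \card\bigl(J_k\setminus F\bigr)
\geqslant L_k - \varepsilon L_k
= T_k(1-\varepsilon),
\end{equation*}
hence
\begin{equation*}
\du(\cM_\cB)\geqslant\limsup_{k\to\infty}\frac{\card(\cM_\cB\cap[1,2T_k))}{2T_k}\geqslant\frac{1-\varepsilon}{2}.
\end{equation*}
Since $\varepsilon<1/4$, the right-hand side strictly exceeds $\varepsilon$, so $d(\cM_\cB)$ does not exist and $\cB$ is not a Besicovitch set.

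For assertion~(iii), I would invoke the dichotomy recalled in the introduction: $N^{-1}\sum_{n=1}^N\delta_{S^n\eta}$ converges weakly to the Mirsky measure $\mu_\eta$ along every subsequence realising $\dl(\cM_\cB)$, and $\eta$ is generic for $\mu_\eta$ if and only if $d(\cM_\cB)$ exists. By (ii), $\eta$ is not generic for $\mu_\eta$. Using weak-$*$ compactness on the space of invariant probability measures on $\{0,1\}^\Z$, some subsequence of $\bigl(\frac{1}{2T_k}\sum_{n=1}^{2T_k}\delta_{S^n\eta}\bigr)_k$ converges to an invariant measure $\mu$ that, by the computation above, satisfies $\mu\{x:x_0=0\} \leqslant \frac{1+\varepsilon}{2}$; whereas $\mu_\eta\{x:x_0=0\} = 1 - \dl(\cM_\cB) > 1-\varepsilon$, and $\frac{1+\varepsilon}{2} < 1-\varepsilon$ because $\varepsilon<1/4$. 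Hence $\mu\neq\mu_\eta$, so $\eta$ is quasi-generic for at least two distinct invariant measures.

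The main obstacle was not in this proposition but in preparing the framework: arranging the prime sets $P_j$ (via condition~(C) and Lemma~\ref{lemma:cB-properties}(c)) so that the Toeplitz property persists \emph{and} the dense blocks $J_k\setminus F$ survive in $\cM_\cB$. Once those lemmas are in place, the explicit choice $L_k = T_k$, $J_k = [T_k,2T_k)$ is exactly what forces $\du(\cM_\cB)\geqslant\frac12-\frac\varepsilon2$ while keeping $\dl(\cM_\cB)<\varepsilon$, delivering all three conclusions at once.
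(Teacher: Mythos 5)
Your proof is correct and follows essentially the same route as the paper's: take $J_k=[T_k,2T_k)$ (equivalently $L_k=T_k$), deduce the Toeplitz property from Lemma~\ref{lemma:cB-properties}d), combine Lemma~\ref{lemma:cB-lemma}c) and Lemma~\ref{lemma:card-estimates}a) to bound $\du(\cM_\cB)$ from below, and conclude from $\dl(\cM_\cB)<\varepsilon<\frac14$ that $\cB$ is not Besicovitch and hence $\eta$ is quasi-generic for at least two measures. You spell out part (iii) slightly more explicitly than the paper does (weak-$*$ compactness producing a second limit point), which is fine. One small transcription slip: in the last step you write $\mu\{x:x_0=0\}\leqslant\frac{1+\varepsilon}{2}$ and $\mu_\eta\{x:x_0=0\}=1-\dl(\cM_\cB)$, but since $\eta_n=0$ exactly when $n\in\cM_\cB$, both displayed inequalities should refer to the cylinder $\{x:x_0=1\}$; equivalently one has $\mu\{x:x_0=0\}\geqslant\frac{1-\varepsilon}{2}$ while $\mu_\eta\{x:x_0=0\}=\dl(\cM_\cB)<\varepsilon$. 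With that correction the comparison $\frac{1+\varepsilon}{2}<1-\varepsilon$ (valid for $\varepsilon<\frac13$) indeed separates the two measures, and the argument is complete.
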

\begin{proof}
Let $J_k=[T_k,2T_k)$ for all $k$. Lemma~\ref{lemma:cB-lemma}d) shows that $\eta$ is a Toeplitz sequence, and Lemma~\ref{lemma:cB-lemma}c) and Lemma~\ref{lemma:card-estimates}a) imply
\begin{equation*}
\card(\cM_\cB\cap[1,2T_k))
\geqslant
\card([T_k,2T_k)\setminus F)
\geqslant
T_k-\card(F\cap[T_k,2T_k))
\geqslant
(1-\varepsilon)T_k
\end{equation*}
for every $k$, so that in particular $\du(\cM_\cB)>\frac{1}{2}-\varepsilon$. 
Combined with Lemma~\ref{lemma:cB-lemma}d)
this shows that $\cB$ is not a Besicovitch set and that $\eta$ is not generic for any measure, so it is quasi-generic for at least two measures.
\end{proof}

\section{Positive entropy}\label{sec:positive-entropy}

For the proof of Proposition~\ref{prop:non-uniquely-ergodic} we made the straightforward choice $J_k=[T_k,2T_k)$. In order to control the entropy of the measures we construct, we will have to make more subtle choices for the sets $J_k\subseteq[T_k,2T_k)$, and in order to include also measures, for which $\eta$ is not quasi-generic, we replace the intervals $[T_k,2T_k)$ by more flexible intervals $[T_k,T_k+L_k)$. The choice of the sets $J_k$ is based on the following lemma, which might be folklore among specialists, but which I could not locate in the literature. So I provide a proof based on properties of Kolmogorov complexity in Section~\ref{sec:complexity-entropy}.

\begin{lemma}\label{lemma:entropy-complexity}
Let $\varepsilon\in(0,\frac{1}{2})$. There is a constant $L_\varepsilon>0$ such that for all
$L\geqslant L_\varepsilon$ and $\gamma\in(0,1/2-\varepsilon)$ there is a word $w_{L,\gamma}\in\{0,1\}^L$ with the following properties:
For each $n>0$ and each $\kappa>0$ there is $\ell_{n,\kappa}>0$  such that, for
all sets $A,B\subseteq\{1,\dots,L\}$ with $d_A,d_B<\varepsilon$, $\Phi(d_A),\Phi(d_B)<\frac{1}{4}\kappa$ and $w_{L,\gamma}\cdot1_{A^c}\cdot 1_B=0$, 
\begin{eqnarray}
(\gamma-\varepsilon) L\leqslant\sum_{i=1}^L(w_{L,\gamma}\cdot 1_{A^c}+1_B)_i\leqslant(\gamma+2\epsilon)L\quad\text{and}
\label{eq:density-bound}\\
\frac{1}{n}H_n(w_{L,\gamma}\cdot 1_{A^c}+1_B)
\geqslant \Phi(\gamma)-\kappa\quad\text{if }L\geqslant\ell_{n,\kappa}.
\end{eqnarray}
\end{lemma}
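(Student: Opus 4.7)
The plan is to take $w_{L,\gamma}$ to be a Kolmogorov--random binary string of length $L$ with exactly $\lfloor\gamma L\rfloor$ ones. The density estimate \eqref{eq:density-bound} then becomes an elementary counting statement, while the entropy estimate follows from the observation that neither erasing the entries on $A$ nor switching on the entries on $B$ can substantially lower the Kolmogorov complexity of $w_{L,\gamma}$, and high Kolmogorov complexity in turn forces a high empirical $n$--block entropy. The string is chosen using the standard pigeonhole fact that fewer than $2^k$ binary strings of length $L$ have Kolmogorov complexity $<k$: since by Stirling $\binom{L}{\lfloor\gamma L\rfloor}=2^{L\Phi(\gamma)+O(\log L)}$, there exists $w_{L,\gamma}\in\{0,1\}^L$ with exactly $\lfloor\gamma L\rfloor$ ones and $K(w_{L,\gamma})\geq L\Phi(\gamma)-c_1\log L$.

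Let $S:=\{i:(w_{L,\gamma})_i=1\}$, so $|S|=\lfloor\gamma L\rfloor$. The hypothesis $w_{L,\gamma}\cdot 1_{A^c}\cdot 1_B=0$ is precisely $(S\setminus A)\cap B=\emptyset$, hence
\[
\sum_{i=1}^L(w_{L,\gamma}\cdot 1_{A^c}+1_B)_i=|S\setminus A|+|B|,
\]
and combining $|S|=\lfloor\gamma L\rfloor$ with $|A|,|B|<\varepsilon L$ yields \eqref{eq:density-bound}. For the entropy bound, set $w:=w_{L,\gamma}\cdot 1_{A^c}+1_B$. The key combinatorial observation is that $w_{L,\gamma}$ is recoverable from $(w,A,B,w_{L,\gamma}|_A)$: on $A$ one reads off $w_{L,\gamma}|_A$, while on $A^c$ the disjointness constraint forces $(w_{L,\gamma})_i=w_i(1-1_B(i))$. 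Consequently, up to an additive $O(\log L)$,
\[
K(w_{L,\gamma})\leq K(w)+K(A)+K(B)+K(w_{L,\gamma}|_A).
\]
The counting bound $K(A)\leq\log\binom{L}{|A|}+O(\log L)\leq L\Phi(d_A)+O(\log L)<L\kappa/4+O(\log L)$ and its analogue for $B$, together with the trivial bound $K(w_{L,\gamma}|_A)\leq|A|\leq d_A L\leq\Phi(d_A)L<L\kappa/4$ (the last inequality uses $x\leq\Phi(x)$ on $[0,1/2]$, a concavity--plus--chord argument), yield
\[
K(w)\geq L\bigl(\Phi(\gamma)-\tfrac{3\kappa}{4}\bigr)-O(\log L).
\]

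To convert this into a lower bound on $H_n(w)/n$, I would invoke the standard finite--length coding inequality $K(w)\leq LH_n(w)/n+O(2^n\log L)+O(\log L)$, obtained by transmitting the empirical distribution on non--overlapping $n$--blocks and Huffman--encoding each block. Choosing $\ell_{n,\kappa}$ so large that these overheads lie below $\kappa L/4$ whenever $L\geq\ell_{n,\kappa}$ converts the previous estimate into $H_n(w)/n\geq\Phi(\gamma)-\kappa$. The main obstacle I anticipate is precisely this last step: one has to formalize the empirical--entropy/Kolmogorov--complexity inequality at finite length in a form clean enough that the $O(2^n\log L)$ overhead can be absorbed into the $\kappa/4$ budget for each fixed $n$, rather than merely in an asymptotic regime with $n$ allowed to grow. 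Once that is set up, the remainder of the argument is pure pigeonhole, Stirling, and elementary properties of $\Phi$.
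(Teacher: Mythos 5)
Your proposal follows essentially the same route as the paper's proof: select a Kolmogorov-random word $w_{L,\gamma}$ of density approximately $\gamma$, observe that $w_{L,\gamma}$ is recoverable from the modified word $w:=w_{L,\gamma}\cdot 1_{A^c}+1_B$ together with low-complexity side information, deduce that $C(w)$ (or $K(w)$) cannot be much smaller than $C(w_{L,\gamma})$, and convert this complexity lower bound into a lower bound on the empirical $n$-block entropy via a source-coding argument. The side information you choose ($A$, $B$, and $w_{L,\gamma}|_A$) is slightly more redundant than the paper's ($1_B$ and $w_{L,\gamma}\cdot 1_A$, via the identity $(w\cdot 1_{A^c}+1_B)-1_B+w\cdot 1_A=w$), which costs you one extra $\Phi(d_A)$ term, but the $\kappa/4$ budget per term absorbs this without difficulty. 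The density estimate and the pigeonhole selection of $w_{L,\gamma}$ match the paper.

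The step you flag as the ``main obstacle'' is indeed the only real gap, and it is slightly more delicate than a pure bookkeeping issue. The coding inequality you invoke, $K(w)\leqslant L\,H_n(w)/n + O(2^n\log L)$, is stated in terms of the sliding-window block entropy $H_n(w)$; but transmitting the empirical distribution on non-overlapping $n$-blocks and Huffman-encoding naturally bounds $K(w)$ by the non-overlapping block entropy $H_n^s(w)$ at a single offset $s$, not by $H_n(w)$. These two quantities need not be close, and the inequality you want goes in the wrong direction for an arbitrary offset. The paper resolves this by a separate concavity lemma (Lemma~\ref{lemma:entropy-inequality}): since the sliding-window empirical distribution is an average of the $n$ offset distributions, concavity of the Shannon entropy on probability vectors gives $H_n(w)\geqslant\frac{1}{n}\sum_{s=0}^{n-1}H_n^s(w)-q_n(L)$ with $q_n(L)\to 0$; one then applies the coding bound \cite[Thm.~2.8.1]{Li-Vitanyi} to each offset $s$ and averages. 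Making this precise in your write-up would close the one genuine gap; the rest of the argument is correct and parallels the paper.
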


We now describe how to choose the $J_k$ in order to get a measure of positive entropy for which $\eta$ is quasi-generic. 
So let $\varepsilon\in(0,\frac{1}{4})$ and choose $\gamma\in(\varepsilon,\frac{1}{2}-\varepsilon)$.
Fix also some number $\kappa\in(0,\varepsilon)$. For each $n\in\N$ and all indices $k$ such that $L_k\geqslant\ell_{n,\kappa}$ we choose a word $w_k=w_{L_k,\gamma}\in\{0,1\}^n$ as in Lemma~\ref{lemma:entropy-complexity}.

For any $w\in\{0,1\}^L$ denote $J(w):=\{i\in[1,L]:w_i=1\}$.
Define the sets $J_k$ for our construction,
\begin{equation}
J_k:=J(w_k)+T_k-1\subseteq[T_k,T_k+L_k).
\end{equation}
Fix a subsequence $(T_{k_i})_i$ for which
\begin{compactitem}
\item the sequence $\left(\frac{1}{T_{k_i}}\sum_{j=0}^{T_{k_i}-1}\delta_{S^j\eta}\right)_i$
converges weakly to some invariant measure 
$\nu_1$, and
\item the sequence $\left(\frac{1}{L_{k_i}}\sum_{j=T_{k_i}}^{T_{k_i}+L_{k_i}-1}\delta_{S^j\eta}\right)_i$
converges weakly to some invariant measure $\nu_2$.
\end{compactitem}
Then,
\begin{compactitem}
\item if $L_k=T_k$, the sequence $\left(\frac{1}{T_{k_i}+L_{k_i}}\sum_{j=0}^{T_{k_i}+L_{k_i}-1}\delta_{S^j\eta}\right)_i$
converges weakly to the invariant measure $\nu=\frac{1}{2}(\nu_1+\nu_2)$, and
\item  if $L_k/T_k\to0$, the sequence $\left(\frac{1}{T_{k_i}+L_{k_i}}\sum_{j=0}^{T_{k_i}+L_{k_i}-1}\delta_{S^j\eta}\right)_i$
converges weakly to the invariant measure $\nu_1$.
\end{compactitem}
Without loss of generality we may assume that $(T_{k_i})_i$ is the full sequence $(T_k)_k$ -- this just eases the notation. 

%

\begin{lemma}\label{lemma:pos-entropy}
We have the following lower bound for the Kolmogorov-Sinai entropy of $(X_\eta,S,\nu_2)$:
$$h_{\nu_2}(S)\geqslant\Phi(\gamma)-4\Phi(2\varepsilon).$$
\end{lemma}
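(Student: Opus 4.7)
The plan is to realize the restriction of $\eta$ to $[T_k,T_k+L_k)$ in exactly the form $w_k\cdot 1_{A^c}+1_B$ required by Lemma~\ref{lemma:entropy-complexity}, and then transfer the resulting empirical block-entropy bound to $\nu_2$ via weak convergence of the measures $\mu_k:=L_k^{-1}\sum_{j=T_k}^{T_k+L_k-1}\delta_{S^j\eta}$. To that end I set $u^{(k)}_i:=\eta_{T_k+i-1}$ for $i\in\{1,\dots,L_k\}$, $A_k:=\{i:T_k+i-1\in F\}$, $C_k:=\{i:T_k+i-1\in\cM_\cB\}$, and $B_k:=C_k\setminus(J(w_k)\setminus A_k)$. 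Because $J_k=J(w_k)+T_k-1$, Lemma~\ref{lemma:cB-lemma}c) gives $J(w_k)\setminus A_k\subseteq C_k$, so a four-way case distinction according to whether $i\in J(w_k)$ and whether $i\in A_k$ yields the pointwise identity
\[
1-u^{(k)}=1_{C_k}=w_k\cdot 1_{A_k^c}+1_{B_k},\qquad w_k\cdot 1_{A_k^c}\cdot 1_{B_k}=0,
\]
which fits the hypothesis of Lemma~\ref{lemma:entropy-complexity} verbatim.

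Next I verify the required density bounds. Lemma~\ref{lemma:card-estimates}a) gives $|A_k|\leq\varepsilon L_k$ directly. For $|B_k|\leq 2\varepsilon L_k$ I need to control $C_k\setminus J(w_k)$; here I add the mild extra requirement $T_{k+1}\geq T_k+L_k$ to the inductive construction, which is permitted since~(A) only asks $T_{k+1}$ to be sufficiently large. This ensures $E_j'\cap[T_k,T_k+L_k)=\emptyset$ for $j>k$, while $E_k'\cap[T_k,T_k+L_k)=J_k\setminus F\subseteq J_k$ since non-trivial multiples of an element of $J_k\subseteq[T_k,2T_k)$ exceed $2T_k>T_k+L_k$. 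Consequently $C_k\setminus J(w_k)$ corresponds to $\big(\bigcup_{j<k}E_j'\big)\cap[T_k,T_k+L_k)\setminus J_k$, which by the argument of Lemma~\ref{lemma:card-estimates}b) has size at most $\sum_{j<k}2e(T_j)L_k<\varepsilon L_k$, and $B_k\subseteq(C_k\setminus J(w_k))\cup(C_k\cap A_k)$ yields $|B_k|<2\varepsilon L_k$.

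For fixed $n\in\N$ I then apply Lemma~\ref{lemma:entropy-complexity} to the pair $A_k,B_k$ with parameter $2\varepsilon$ in place of $\varepsilon$, and with the construction's $\kappa$ chosen so that $\kappa\geq 4\Phi(2\varepsilon)$ (so that $\Phi(d_{A_k}),\Phi(d_{B_k})\leq\Phi(2\varepsilon)=\tfrac14\kappa$). For every $k$ with $L_k\geq\ell_{n,\kappa}$ the lemma delivers
\[
\tfrac{1}{n}H_n(u^{(k)})=\tfrac{1}{n}H_n(w_k\cdot 1_{A_k^c}+1_{B_k})\geq\Phi(\gamma)-4\Phi(2\varepsilon),
\]
using that bitwise complementation preserves block entropy. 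The empirical length-$n$ block distribution of $u^{(k)}$ differs from the cylinder values of $\mu_k$ by at most $O(n/L_k)$; since cylinders are clopen, $\mu_k\to\nu_2$ implies that this distribution converges to the $n$-marginal of $\nu_2$. Continuity of Shannon entropy then gives $\tfrac{1}{n}H_n^{\nu_2}\geq\Phi(\gamma)-4\Phi(2\varepsilon)$ for every $n$, and because $\{[0],[1]\}$ is a generator for the shift on $\{0,1\}^\Z$, letting $n\to\infty$ produces $h_{\nu_2}(S)\geq\Phi(\gamma)-4\Phi(2\varepsilon)$.

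The principal difficulty is the first step — pinning down the algebraic identity $1-u^{(k)}=w_k\cdot 1_{A_k^c}+1_{B_k}$ together with the orthogonality $w_k\cdot 1_{A_k^c}\cdot 1_{B_k}=0$ — combined with the tight $|B_k|\leq 2\varepsilon L_k$ estimate, which crucially depends on the additional scheduling constraint $T_{k+1}\geq T_k+L_k$ so that later stages of the construction do not pollute the interval $[T_k,T_k+L_k)$. Once these points are in place, the passage from finite-word block entropy to $h_{\nu_2}(S)$ is a routine combination of continuity and the generator property.
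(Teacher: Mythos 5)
Your proposal is correct and follows essentially the same route as the paper: decompose $1_{\cM_\cB\cap[T_k,T_k+L_k)}$ (equivalently, the bitwise complement of $\eta$ on that window) as $w_k\cdot 1_{A_k^c}+1_{B_k}$ with $A_k$ coming from $F$ and $B_k$ from the ``pollution'' by earlier $E_j'$, bound $d_{A_k}\leqslant\varepsilon$ and $d_{B_k}\leqslant 2\varepsilon$ via Lemma~\ref{lemma:card-estimates}, invoke Lemma~\ref{lemma:entropy-complexity} with $2\varepsilon$ and $\kappa=4\Phi(2\varepsilon)$, and pass the block-entropy bound to $\nu_2$ along the convergent subsequence. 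The definitions of $A_k$ and $B_k$ in your version (via $C_k\setminus(J(w_k)\setminus A_k)$) agree with the paper's $A_k+T_k-1=F\cap[T_k,T_k+L_k)$ and $B_k+T_k-1=\bigcup_{j<k}\bigl(E_j'\setminus(J_k\setminus F)\bigr)\cap[T_k,T_k+L_k)$; only the bookkeeping for $|B_k|$ is packaged a little differently, and the final limiting step is phrased through continuity of entropy rather than through the cylinder identity the paper writes out, but the content is identical. Your explicit scheduling requirement $T_{k+1}\geqslant T_k+L_k$, used to justify $\cM_\cB\cap[T_k,T_k+L_k)=\cM_{\cB_k}\cap[T_k,T_k+L_k)$, is indeed needed; the paper leaves it implicit (it is forced by $T_{k+1}>\lambda(T_k)$ once $\lambda(T_k)$ is taken at least as large as the period of $E_{T_k}$, and in any case is compatible with the freedom in choosing $T_{k+1}$), so spelling it out is a harmless and arguably helpful clarification rather than a genuine deviation.
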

\begin{proof}
For each $k\in\N$,
\begin{equation}\label{eq:MB-intersection}
\begin{split}
\cM_\cB\cap[T_k,T_k+L_k)
&=
\cM_{\cB_k}\cap[T_k,T_k+L_k)\\
&=
\bigcup_{j=1}^{k-1}E_j'\cap[T_k,T_k+L_k)\cup \left((J_k\setminus F)\cap[T_k,T_k+L_k)\right)\\
&=
J_k\setminus\left(F\cap [T_k,T_k+L_k)\right)\cup
\bigcup_{j=1}^{k-1}\left(E_j'\setminus(J_k\setminus F)\right)\cap[T_k,T_k+L_k)\\
&=:
(J(w_k)+T_k-1)\setminus (A_k+T_k-1)\cup (B_k+T_k-1)\\
&=
J(w_k\cdot 1_{A_k^c}+1_{B_k})
\end{split}
\end{equation}
with sets $A_k,B_k\subseteq[1,L_k]$ such that $B_k\cap (J(w_k)\setminus A_k)=\emptyset$, to which we want to apply Lemma~\ref{lemma:entropy-complexity} -- with $2\varepsilon$ instead of $\varepsilon$ and $\kappa=4\Phi(2\epsilon)$. To this end observe that Lemma~\ref{lemma:card-estimates} implies
\begin{eqnarray*}
d_{A_k}
=
L_k^{-1}\card(F\cap[T_k,T_k+L_k))
\leqslant
\varepsilon\quad\text{and}\hspace*{1cm}\\
d_{B_k}
=
L_k^{-1}\card\left(\bigcup_{j=1}^{k-1}E_j'\setminus(J_k\setminus F)\cap[T_k,T_k+L_k)\right)
\leqslant
2\varepsilon,
\end{eqnarray*}
in particular also $\Phi(d_{A_k}),\Phi(d_{B_k})<\Phi(2\varepsilon)=\frac{\kappa}{4}$.
Hence, Lemma~\ref{lemma:entropy-complexity} shows that
for each $n\in\N$ there is $k_n>0$  such that, for all $k\geqslant k_n$,
\begin{eqnarray}
(\gamma-2\varepsilon) L_k\leqslant\sum_{i=1}^{L_k}(w_k\cdot 1_{A_k^c}+1_{B_k})_i\leqslant(\gamma+4\epsilon)L_k\quad\text{and}
\label{eq:density-bound*}\\
\frac{1}{n}H_n(w_k\cdot 1_{A_k^c}+1_{B_k})
\geqslant \Phi(\gamma)-\kappa.\hspace*{1cm}\label{eq:entropy_lower_bound}
\end{eqnarray}

So fix $n\in\N$. For each cylinder set $[u]$ determined by $u\in\{0,1\}^n$ we have
\begin{equation*}
\nu_2([u])
=
\lim_{k\to\infty}\frac{1}{L_k}\sum_{\ell=T_k}^{T_k+L_k-n}1_{[u]}(S^\ell\eta)
=
\lim_{k\to\infty}\frac{1}{L_k}\card\{\ell\in[T_k,T_k+L_k-n]:\eta_{[\ell,\ell+n-1]}=u\}.
\end{equation*}
It follows from~\eqref{eq:MB-intersection} and~\eqref{eq:entropy_lower_bound}
that $H_n(\nu_2)$, the entropy of $\nu_2$ on blocks of length $n$, can be estimated by
\begin{equation*}
\frac{1}{n}H_n(\nu_2)
\geqslant
\Phi(\gamma)-\kappa
=
\Phi(\gamma)-4\Phi(2\epsilon),
\end{equation*}
so that
$h_{\nu_2}(S)\geqslant \Phi(\gamma)-4\Phi(2\varepsilon)$.
\end{proof}
\begin{proof}[Proof of Theorem~\ref{theo:1}]
(ii)\;By Lemma~\ref{lemma:pos-entropy}, $h_{\nu_2}(S)\geqslant\Phi(\gamma)-4\Phi(2\epsilon)$ is strictly positive if $\gamma>\Phi^{-1}(4\Phi(2\epsilon))$, which can easily be achieved for small enough $\varepsilon>0$. \\
(i)\;
$\eta$ is a Toeplitz sequence by Lemma~\ref{lemma:cB-properties}d). It is irregular, because $X_\eta$ is not uniquely ergodic by assertion (ii).\\
(iii)\;
(a)\;Choose $T_k=L_k$. Then $\eta$ is quasi-generic for the invariant measure $\nu=\frac{1}{2}(\nu_1+\nu_2)$, and $h_\nu(S)\geqslant\frac{1}{2}h_{\nu_2}(S)>0$ as above. 
\\
(b)\;Choose $T_k=k^2L_k$. Then $\nu_2$ is a measure of positive entropy as before, but the set $\cB$ is Besicovitch: 
\begin{equation*}
\sum_{b\in\cB}\frac{1}{b}
\leqslant
\sum_{k=1}^\infty\sum_{j=T_k}^{T_k+L_k-1}\frac{1}{j}
\leqslant
1+\sum_{k=1}^\infty\sum_{j=T_k+1}^{T_k+L_k}\frac{1}{j}
\leqslant
1+\sum_{k=1}^\infty\log\frac{T_k+L_k}{T_k}
\leqslant
1+\sum_{k=1}^\infty\log\left(1+\frac{1}{k^2}\right)<\infty.
\end{equation*}
\end{proof}

\section{On Kolmogorov complexity and entropy}\label{sec:complexity-entropy}

Only Lemma~\ref{lemma:entropy-complexity} from this section will be used in the sequel. It is formulated just in terms of entropy, and the reader who considers it as folklore should skip this section.

Very loosely speaking, the Kolmogorov complexity $C(w)$ of a word $w\in\{0,1\}^*$ is the length of the shortest binary code that can serve as a program for a universal Turing machine to print the word $w$ on its output tape and then to stop. Of course this definition depends on the choice of the particular Turing machine, but it can be shown that for any two different universal Turing machines there exists a constant such that the difference of complexities defined with respect to these two machines does not exceed this constant for any word $w$ of any length. The monograph \cite{Li-Vitanyi} provides a precise and detailed introduction to Kolmogorov complexity and other variants of algorithmic complexity and their relation to entropy and coding, and we will refer to notation and results from this book throughout this section.

A general pitfall when dealing with algorithmic complexity is that (in)equalities which one might expect when one does not think too much about the details of their proofs, hold only up to a constant or even logarithmic (logarithm of the word length) error term. One of the reasons is that the transitions between consecutive words on the same input tape of the Turing machine must be recognizable, another one that sometimes the word length must be provided as additional information to the Turing machine to make the intended algorithm work. This can be dealt with properly by introducing variants of Kolmogorov complexity like the prefix complexity $K(w)$ in \cite[Sec.~3.1]{Li-Vitanyi}.
It should not come as a surprise that $C(w)$ and $K(w)$ differ only by a logarithmic
(in the word length) term.
As logarithmic terms do not influence our arguments, we will provide only ``naive'' proofs, whenever complexity is involved.

Denote by $\Phi:[0,1]\to[0,\log 2]$, $\phi(t)=-t\log_2(t)-(1-t)\log_2(1-t)$ the binary entropy function.
The following lemma is folklore:

\begin{lemma}\label{lemma:high-complexity-words}
Let $\varepsilon>0$. There is a constant $L_\varepsilon$
such that for each $L\geqslant L_\varepsilon$
and each $\gamma\in(0,1/2-\varepsilon)$
there is some $w_{L,\gamma}\in\{0,1\}^L$ such that
\begin{equation*}
\gamma L\leqslant\sum_{i=1}^L(w_{L,\gamma})_i\leqslant(\gamma+\varepsilon)L
\quad\text{and}\quad
C(w_{L,\gamma})\geqslant\Phi(\gamma) L.
\end{equation*}
\end{lemma}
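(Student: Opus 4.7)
The plan is a pigeonhole argument against Kolmogorov complexity. First I would count the binary strings of length $L$ whose number of $1$'s lies in $[\gamma L,(\gamma+\varepsilon)L]$, namely
$$N_{L,\gamma}:=\sum_{k=\lceil\gamma L\rceil}^{\lfloor(\gamma+\varepsilon)L\rfloor}\binom{L}{k}.$$
Since $\gamma+\varepsilon<1/2$, the map $k\mapsto\binom{L}{k}$ is monotone increasing on the summation range, so dropping all but a single middle term gives $N_{L,\gamma}\geqslant\binom{L}{\lfloor(\gamma+\varepsilon/2)L\rfloor}\geqslant(L+1)^{-1}\,2^{\Phi(\gamma+\varepsilon/2)L}$ by the standard entropy lower bound for central binomial coefficients.

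Next I would build in uniformity in $\gamma$. Because $\Phi$ is continuous and strictly increasing on $[0,1/2]$, the quantity $\delta_\varepsilon:=\inf_{\gamma\in[0,1/2-\varepsilon]}\bigl(\Phi(\gamma+\varepsilon/2)-\Phi(\gamma)\bigr)$ is strictly positive. Choosing $L_\varepsilon$ so large that $2^{\delta_\varepsilon L}\geqslant 2(L+1)$ for every $L\geqslant L_\varepsilon$, the previous bound becomes $N_{L,\gamma}\geqslant 2^{\Phi(\gamma)L+1}$ uniformly in $\gamma\in(0,1/2-\varepsilon)$ and $L\geqslant L_\varepsilon$.

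Finally I would invoke the standard counting bound behind Kolmogorov complexity: since the number of binary programs of length strictly less than $\lceil\Phi(\gamma)L\rceil\leqslant\Phi(\gamma)L+1$ is at most $2^{\Phi(\gamma)L+1}-1$, at most that many words $w\in\{0,1\}^L$ can satisfy $C(w)<\Phi(\gamma)L$. Since our collection is strictly larger, at least one of its members $w_{L,\gamma}$ must satisfy $C(w_{L,\gamma})\geqslant\Phi(\gamma)L$, and by construction it lies in the prescribed weight range $[\gamma L,(\gamma+\varepsilon)L]$.

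The only subtle point I anticipate is the uniformity of $L_\varepsilon$ in $\gamma$, which rests on the strict positivity of $\delta_\varepsilon$; this in turn uses the strict monotonicity and continuity of $\Phi$ on the whole closed subinterval of $[0,1/2]$, not just a local expansion at some distinguished $\gamma_0$. Everything else is Stirling's estimate for binomials together with the universal ``short program'' counting lemma, which (as the authors warn) is only accurate up to the logarithmic corrections already absorbed in the factor $(L+1)^{-1}$.
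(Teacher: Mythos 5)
Your proof is correct and follows essentially the same route as the paper's: you count words of the prescribed weight, lower-bound the count by a single binomial coefficient near weight $(\gamma+\varepsilon/2)L$, apply the Stirling/entropy lower bound $\binom{L}{k}\geqslant(L+1)^{-1}2^{L\Phi(k/L)}$, and then invoke the counting argument for Kolmogorov complexity (which the paper cites as Li--Vit\'anyi Thm.~2.2.1 and you unfold directly). The only genuine value you add is making the uniformity in $\gamma$ explicit via $\delta_\varepsilon:=\inf_{\gamma\in[0,1/2-\varepsilon]}\bigl(\Phi(\gamma+\varepsilon/2)-\Phi(\gamma)\bigr)>0$, which the paper relegates to a footnote with an $o(1)$; this is a welcome clarification but not a different method. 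Two small points worth tightening: with $k=\lfloor(\gamma+\varepsilon/2)L\rfloor$ the entropy bound actually gives $2^{L\Phi(k/L)}$, which can be slightly below $2^{L\Phi(\gamma+\varepsilon/2)}$, so you should either take $k=\lceil(\gamma+\varepsilon/2)L\rceil$ (still in range for $L\geqslant L_\varepsilon$) or absorb the $O(1)$ discrepancy (uniform because $k/L\geqslant\varepsilon/2$ keeps $\Phi'$ bounded) into a slightly larger $L_\varepsilon$; and one should note that $\lfloor(\gamma+\varepsilon/2)L\rfloor\geqslant\lceil\gamma L\rceil$ requires $L\geqslant 2/\varepsilon$, again harmless.
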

\begin{proof}
For large enough $L$ (``large'' depending only on $\varepsilon$) we can fix $k\in\N$ such that $\gamma+\varepsilon/2<k/L<\gamma+\varepsilon$. Hence there are at least $L\choose k$ words $w\in\{0,1\}^L$ with
$(\gamma+\varepsilon/2) L\leqslant\sum_{i=1}^Lw_i\leqslant(\gamma+\varepsilon)L$.
At least one of these words has complexity $C(w)\geqslant\log_2{L\choose k}-1$ \cite[Thm.~2.2.1]{Li-Vitanyi}, and one can estimate that this is bounded from below by 
$L\Phi(\gamma)$ when $L\geqslant L_\varepsilon$ for some suitable $L_\varepsilon$.
\footnote{${L\choose k}\geqslant 2^{L(\Phi(k/L)+o(1))}$. Hence 
$\log_2{L\choose k}-1\geqslant L(\Phi(\gamma+\varepsilon/2)+o(1))\geqslant L\Phi(\gamma)$ if $L$ is larger than a constant depending only on $\varepsilon$.}
\end{proof}

We fix some notation.
\begin{itemize}[-]
\item For $0<n<L$ let $m:=[(L-(n-1))/n]$ so that $L':=(m+1)n-1\leqslant L<(m+2)n-1$.
\item Let $0<n<L$ and $w\in\{0,1\}^L$.
\begin{compactenum}[(1)]
\item
For $s\in\{0,\dots,n-1\}$ denote by $H_n^s(w)$ the entropy of the empirical distribution of blocks of length $n$ in the sample $(w_{[jn+s+1,jn+s+n]})_{j=0,\dots,m-1}$. (These are the non-overlapping sub-words of $w$ with length $n$ starting at position $s$, except possibly for the last one.)
\item Denote by $H_n(w)$ the entropy of the empirical distribution of blocks of length $n$ in the sample $(w_{[j+1,j+n]})_{j=0,\dots,L-n}$. (These are all sub-words of $w$ with length $n$.)
\end{compactenum}
\item For $A\subseteq \{1,\dots,L\}$ denote $d_A:=\card(A)/L$.
\end{itemize}
\begin{lemma}\label{lemma:entropy-inequality}
$H_n(w)\geqslant \frac{1}{n}\sum_{s=0}^{n-1}H_n^s(w)-q_n(L)$, where $q_n(L)\to0$ as $L\to\infty$.
\end{lemma}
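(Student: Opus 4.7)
The plan is to view both sides as Shannon entropies of empirical distributions on $\{0,1\}^n$ and exploit the concavity of entropy twice. The key structural fact is that the $n$ non-overlapping samples indexed by the starting residue $s$ modulo $n$, taken together, partially recover the big overlapping sample, the only discrepancy being the last fewer-than-$n$ positions.

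Concretely, first I would set $N := L - n + 1$ (the number of length-$n$ sub-words of $w$) and let $\mu$ be the empirical distribution on $\{0,1\}^n$ of the family $(w_{[j+1,j+n]})_{j=0,\dots,L-n}$, so $H_n(w) = H(\mu)$. For each $s \in \{0,\dots,n-1\}$ let $\mu_s$ be the empirical distribution of the $m$ non-overlapping blocks indexed by $s$, so $H_n^s(w) = H(\mu_s)$. The crucial observation is that the $mn$ starting positions $jn + s$, $j = 0,\dots,m-1$, $s = 0,\dots,n-1$, are pairwise distinct (different residues modulo $n$) and are all among the $N$ starting positions counted by $\mu$. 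Setting $\bar\mu := \frac{1}{n}\sum_{s=0}^{n-1}\mu_s$, this yields the convex decomposition
\[
\mu \;=\; \frac{mn}{N}\,\bar\mu \;+\; \frac{N-mn}{N}\,\rho,
\]
where $\rho$ is the empirical distribution over the $N - mn$ leftover sub-words. Since $m = \lfloor N/n\rfloor$, the leftover count satisfies $N - mn < n$.

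Now I would apply concavity of Shannon entropy twice. First, using $H(\rho) \geq 0$,
\[
H(\mu) \;\geq\; \frac{mn}{N}\,H(\bar\mu) + \frac{N-mn}{N}\,H(\rho) \;\geq\; \frac{mn}{N}\,H(\bar\mu).
\]
Second, concavity applied to the definition of $\bar\mu$ gives $H(\bar\mu) \geq \frac{1}{n}\sum_{s=0}^{n-1} H(\mu_s)$. Combining the two,
\[
H(\mu) \;\geq\; \frac{1}{n}\sum_{s=0}^{n-1} H(\mu_s) \;-\; \Bigl(1-\tfrac{mn}{N}\Bigr)\cdot\frac{1}{n}\sum_{s=0}^{n-1} H(\mu_s).
\]
The deficit is controlled with the trivial bound $H(\mu_s) \leq n\log 2$ together with $N - mn < n$, which yields $q_n(L) := n^2(\log 2)/(L-n+1)$; this tends to $0$ as $L \to \infty$ with $n$ fixed, as required.

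The only (minor) obstacle is the bookkeeping of the fewer-than-$n$ overlapping sub-words not captured by any $\mu_s$, and the verification that the mixing weight $\frac{N-mn}{N}$ really is $O(n/L)$; both are immediate from $m = \lfloor N/n\rfloor$. Everything else is standard concavity of Shannon entropy, so I expect no substantive difficulty.
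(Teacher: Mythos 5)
Your argument is correct, and it takes a cleaner route than the paper's. You use the convex decomposition $\mu=\tfrac{mn}{N}\bar\mu+\tfrac{N-mn}{N}\rho$ of the big overlapping empirical distribution, and then apply concavity of Shannon entropy twice (once to drop $H(\rho)\geqslant 0$, once inside $\bar\mu$), so the whole proof reduces to the trivial bounds $H(\mu_s)\leqslant n\log 2$ and $N-mn<n$. The paper shares your first step in disguise: it restricts $w$ to a word $w'$ of length $L'=(m+1)n-1$, notes that $H_n(w')=H(\bar\mu)\geqslant\tfrac1n\sum_s H_n^s(w)$ by concavity, and then controls $|H_n(w')-H_n(w)|$ by a block-by-block comparison of empirical frequencies together with concavity and monotonicity of $\varphi(x)=-x\log_2 x$, arriving at the error bound $q_n(L)=2^n\varphi\!\left(\tfrac{n}{L-(n-1)}\right)$. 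Your convex-mixture argument bypasses that block-by-block analysis entirely and yields the tighter bound $q_n(L)=n^2(\log 2)/(L-n+1)$, with no exponential $2^n$ factor; since the lemma only needs $q_n(L)\to 0$ for fixed $n$, both bounds suffice, but yours is the more economical derivation.
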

\begin{proof}
Denote by $w'$ the restriction of the word $w$ to the indices $[1,L']$. Then the collection of length-$n$ subword of $w'$ is the disjoint union of the samples from item (1), so that $H_n(w')\geqslant \frac{1}{n}\sum_{s=0}^{n-1}H_n^s(w)$, because the entropy function (as a function on probability vectors) is concave. So it remains to estimate the difference $H_n(w')-H_n(w)$. As $L-L'<n$, a crude estimate can use the fact that the relative frequencies of any block $u\in\{0,1\}^n$ in $w$ and $w'$ can differ by at most $(n-1)/L'<n/((m+1)n)=1/(m+1)$. Hence, the contribution of each single block to the entropy can change by at most $\varphi(1/(m+1))$, where we use that the function $\varphi(x)=-x\log_2x$ is concave and increasing on the interval $[0,e^{-1}]$. It follows that $|H_n(w')-H_n(w)|\leqslant 2^n\varphi(1/(m+1))\leqslant2^n\varphi(\frac{n}{L-(n-1)})=:q_n(L)$ and $q_n(L)\to0$ as $L\to\infty$.
\end{proof}

\begin{proof}[Sketch of a proof of Lemma~\ref{lemma:entropy-complexity} using Kolmogorov complexity]
The inequalities in \eqref{eq:density-bound} are obvious. We turn to the lower bound for the entropy.
Let $w=w_{L,\gamma}$.
It is intuitively clear that 
\begin{equation*}
C(w)\leqslant C(w\cdot 1_{A^c}+1_B)+C(1_B)+C(w\cdot 1_A)+O(\log L).
\end{equation*}
Just observe that $(w\cdot 1_{A^c}+1_B)-1_B+w\cdot 1_A=w$.
It follows from \cite[Thm.~2.8.1]{Li-Vitanyi} that, for each $n>0$, there is a sequence $\epsilon_n(1)>\epsilon_n(2)>\dots\searrow 0$
 such that, for all $s\in\{0,\dots,n-1\}$,
\begin{equation*}
\begin{split}
C(w\cdot 1_{A^c}+1_B)
&\leqslant
m(H_n^s(w\cdot 1_{A^c}+1_B)+\epsilon_n(m))+2n
\\
C(1_B)
&\leqslant
L(H_1(1_B))+\epsilon_1(L))\\
C(w\cdot 1_{A})
&\leqslant
L(H_1(w\cdot 1_A)+\epsilon_1(L)).
\end{split}
\end{equation*}
But $H_1(1_B)\leqslant\Phi(d_B)$, $H_1(w\cdot 1_A)\leqslant\Phi(d_A)$, 
and $\frac{1}{n}\sum_{s=0}^{n-1}H_n^s(w\cdot 1_{A^c}+1_B)\leqslant H_n(w\cdot 1_{A^c}+1_B)+q_n(L)$ by Lemma~\ref{lemma:entropy-inequality},
so that
\begin{equation*}
C(w)\leqslant
L\left(\frac{1}{n}H_n(w\cdot1_{A^c}+1_B)+\frac{m}L\epsilon_n(m)+q_n(L)+\Phi(d_B)+\Phi(d_A)+\epsilon_1(L)\right).
\end{equation*}
By Lemma~\ref{lemma:high-complexity-words},
\begin{equation*}
C(w)\geqslant L\Phi(\gamma)-1.
\end{equation*}
Hence
\begin{equation*}
\begin{split}
\frac{1}{n}H_n(w\cdot1_{A^c}+1_B)
&\geqslant
\Phi(\gamma)-\frac{1}{L}-q_n(L)-\frac{\epsilon_n(L/(n+1))}{n}-\Phi(d_A)-\Phi(d_B)-\epsilon_1(L)\\
&=
\Phi(\gamma)-\kappa/2-\rho_n(L),
\end{split}
\end{equation*}
where $\rho_n(L):=\frac{1}{L}+q_n(L)+\frac{\epsilon_n(L/(n+1))}{n}+\epsilon_1(L)\searrow 0$ as $L\to\infty$ for each fixed $n$. To finish the proof,
choose $\ell_{n,\kappa}\geqslant L_\varepsilon$ so large that $\rho_n(L)\leqslant\frac{\kappa}{2}$ for $L\geqslant \ell_{n,\kappa}$.
\end{proof}

%
\end{document}